\theoremstyle{plain}
\newtheorem{theorem}{Theorem}
\newtheorem{con}[theorem]{Conjecture}
\newtheorem{lemma}[theorem]{Lemma}
\newtheorem{proposition}[theorem]{Proposition}
\theoremstyle{definition}
\newtheorem{example}[theorem]{Example}
\newtheorem{remark}[theorem]{Remark}
\newcommand{\Z}{\mathbb{Z}}
\newcommand{\ID}{\gamma^{ID}}
\newcommand{\LD}{\gamma^{LD}}
\newcommand{\ms}{\bar{s}}
\author{Ville Junnila\affiliationmark{1}
  \and Tero Laihonen\affiliationmark{1}
  \and Gabrielle Paris\affiliationmark{2}\thanks{Supported by the ANR-14-CE25-0006 project of the French National Research Agency}}
\title[Solving conjectures on codes for location in circulant graphs]{Solving Two Conjectures regarding Codes for Location in Circulant Graphs}
\affiliation{
  Department of Mathematics and Statistics, University of Turku, Finland\\
  LIRIS, University of Lyon, France}
\keywords{Identifying code, locating-dominating code,
circulant graph}
\begin{document}
\publicationdetails{21}{2019}{3}{2}{3973}
\maketitle

\begin{abstract}
Identifying and locating-dominating codes have been widely studied in circulant graphs of type $C_n(1,2, \ldots, r)$, which can also be viewed as power graphs of cycles. Recently, Ghebleh and Niepel (2013) considered identification and location-domination in the circulant graphs $C_n(1,3)$. They showed that the smallest cardinality of a locating-dominating code in $C_n(1,3)$ is at least $\lceil n/3 \rceil$ and at most $\lceil n/3 \rceil + 1$ for all $n \geq 9$. Moreover, they proved that the lower bound is strict when $n \equiv 0, 1, 4 \pmod{6}$ and conjectured that the lower bound can be increased by one for other $n$. In this paper, we prove their conjecture. Similarly, they showed that the smallest cardinality of an identifying code in $C_n(1,3)$ is at least $\lceil 4n/11 \rceil$ and at most $\lceil 4n/11 \rceil + 1$ for all $n \geq 11$. Furthermore, they proved that the lower bound is attained for most of the lengths $n$ and conjectured that in the rest of the cases the lower bound can improved by one. This conjecture is also proved in the paper. The proofs of the conjectures are based on a novel approach which, instead of making use of the local properties of the graphs as is usual to identification and location-domination, also manages to take advantage of the global properties of the codes and the underlying graphs.
\end{abstract}



\section{Introduction}

Let $G = (V, E)$ be a simple, undirected graph with the vertex set $V$ and the edge set $E$. The \emph{open neighbourhood} $N(G;u)$ of $u \in V$ consists of the vertices adjacent to $u$, i.e., $N(G;u) = \{ v \in V \ | \ uv \in E \}$. The \emph{closed neighbourhood} $N[G;u]$ of $u \in V$ is defined as $N[G;u] = N(u) \cup \{u\}$. Regarding the open and closed neighbourhoods, if the underlying graph is known from the context, then we can simply write $N(G;u) = N(u)$ and $N[G;u] = N[u]$. A nonempty subset $C \subseteq V$ is called a \emph{code}, and its elements are called \emph{codewords}. The \emph{identifying set} (or the $I$\emph{-set} or the \emph{identifier}) of $u$ is defined as $I(G,C;u) = N[G;u] \cap C$; if the graph $G$ or the code $C$ is known from the context, then we can again write $I(G,C;u) = I(G;u) = I(C;u) = I(u)$. 

Let $C$ be a code in $G$. A vertex $u \in V$ is \emph{covered} or \emph{dominated} by a codeword of $C$ if the identifying set $I(C;u)$ is nonempty. The code $C$ is \emph{dominating} in $G$ if all the vertices of $V$ are covered by a codeword of $C$, i.e., $|I(C;u)| \geq 1$ for all $u \in V$. The code $C$ is \emph{identifying} in $G$ if $C$ is dominating and for all distinct $u,v \in V$ we have
\[
I(C;u) \neq I(C;v) \textrm{.}
\]
The definition of identifying codes is due to Karpovsky \emph{et al.}~\cite{kcl}, and the original motivation for studying such codes comes from fault diagnosis in multiprocessor systems. The concept of locating-dominating codes is closely related to the one of identifying codes. We say that the code is \emph{locating-dominating} in $G$ if $C$ is dominating and for all distinct $u,v \in V \setminus C$ we have $I(C;u) \neq I(C;v)$.
The definition of locating-dominating codes was introduced by Slater~\cite{RS:LDnumber,S:DomLocAcyclic,S:DomandRef}. The original motivation for locating-dominating codes was based on fire and intruder alarm systems. An identifying or locating-dominating code with the smallest cardinality in a given finite graph $G$ is called \emph{optimal}. The number of codewords in an optimal identifying and locating-dominating code in a finite graph $G$ is denoted by $\ID(G)$ and $\LD(G)$, respectively.

In this paper, we focus on studying identifying and locating-dominating codes in so called circulant graphs. For the definition of circulant graphs, we first assume that $n$ and $d_1, d_2, \ldots, d_k$ are positive integers and $d_i\le n/2$ for all $i=1,\dots,k$. Then the circulant graph $C_n(d_1, d_2, \ldots, d_k)$ is defined as follows: the vertex set is $\Z_n = \{0,1, \ldots, n-1\}$ and the open neighbourhood of a vertex $u \in \Z_n$ is
\[
N(u) = \{u \pm d_1, u \pm d_2, \ldots, u \pm d_k\} \text{,}
\]
where the calculations are done modulo $n$. Previously, in~\cite{BCHLildchc,CLMcp,EJLldc,GMS:IdCyc,JLidcp,Mlldcn,RobRob,XTHicco}, identifying and locating-dominating codes have been studied in the circulant graphs $C_n(1,2, \ldots, r)$ $(r \in \Z, r \geq 1)$, which can also be viewed as power graphs of cycles of length $n$. (In Remark~\ref{RemarkManuelProblem} in the end of Section~\ref{SectionLD}, we discuss some problems occurring in the paper~\cite{Mlldcn}.) For various other papers on the subject, see the
online bibliography~\cite{lowww}. Recently, in~\cite{GNlidcn}, Ghebleh and Niepel studied identification and location-domination in $C_n(1,3)$. Regarding locating-dominating codes, they showed that $\LD(C_n(1,3)) = \lceil n/3 \rceil$ if $n \equiv 0, 1, 4 \pmod{6}$ and $\lceil n/3 \rceil \leq \LD(C_n(1,3)) \leq \lceil n/3 \rceil +1$ if $n \equiv 2, 3, 5 \pmod{6}$. Furthermore, they stated the following conjecture, which we prove in Section~\ref{SectionLD}.
\begin{con} \label{ConjectureLD}
If $n$ is an integer such that $n \geq 13$ and $n \equiv 2, 3, 5 \pmod{6}$, then we obtain that $\LD(C_n(1,3)) = \lceil n/3 \rceil+1$.
\end{con}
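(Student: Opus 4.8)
The plan is to prove the lower bound $\LD(C_n(1,3)) \geq \lceil n/3 \rceil + 1$ for $n \equiv 2, 3, 5 \pmod{6}$, since the matching upper bound is already established by Ghebleh and Niepel. The abstract hints strongly at the right strategy: rather than a purely local counting argument (which only yields $\lceil n/3 \rceil$), one must exploit global structure. Let me think about how to set this up.

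Let me reconsider. In $C_n(1,3)$ each vertex $u$ has closed neighbourhood $N[u] = \{u-3, u-1, u, u+1, u+3\}$, so $|N[u]| = 5$. A locating-dominating code $C$ must dominate every vertex and distinguish every pair of non-codewords by their $I$-sets. The naive share/discharging bound gives roughly $n/3$ because a codeword can serve at most a bounded number of non-codewords uniquely.

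My plan: First, I would assume for contradiction that $|C| = \lceil n/3 \rceil$ and derive tight structural constraints. The key is a \emph{share} argument. For each codeword $c \in C$, define its share $s(c) = \sum_{u : c \in I(u)} 1/|I(u)|$, so that $\sum_{c \in C} s(c) = n$ (every vertex contributes total mass $1$ distributed among the codewords dominating it). If $|C| = \lceil n/3 \rceil$, the average share is about $n / \lceil n/3 \rceil \approx 3$, so I need to understand when a codeword can have share close to $3$ or more. A codeword $c$ dominates the five vertices of $N[c]$; each non-codeword $u \in N[c]$ with $|I(u)| = 1$ contributes a full unit, and such a $u$ is uniquely located \emph{via} $c$. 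The locating-dominating condition forces that at most one non-codeword can have $I$-set equal to $\{c\}$, which caps how many full units $c$ can collect and bounds $s(c)$. The first step, then, is to prove a local bound like $s(c) \leq 3$ for every codeword, with a careful enumeration of the at most $2^5$ local codeword patterns around $c$ showing equality $s(c)=3$ is only possible in very rigid configurations.

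The heart of the argument — and the main obstacle — is the \emph{global} step: showing that the rigid local configurations achieving $s(c) = 3$ cannot tile all of $\Z_n$ when $n \equiv 2,3,5 \pmod 6$. If $|C| = \lceil n/3 \rceil$ forces average share $> $ some threshold, one argues that \emph{almost every} codeword must realize the extremal share-$3$ pattern, and these extremal patterns must fit together periodically around the cycle. I expect the extremal pattern to correspond to an (almost) periodic placement of codewords with period dividing something like $6$ (matching the density $1/3$ and the jump $3$), and the obstruction is a congruence/parity incompatibility: a fully periodic extremal code exists only when the residue of $n$ is one of $0,1,4 \pmod 6$, exactly the cases where the lower bound $\lceil n/3\rceil$ is already known to be achieved. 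For $n \equiv 2,3,5$, the cyclic closure condition cannot be met — wrapping the periodic pattern around $\Z_n$ creates a ``defect'' region where two non-codewords share the same $I$-set, contradicting the locating property. Formalizing this wrap-around defect, and ruling out all ways to patch it while keeping $|C| = \lceil n/3\rceil$, is where the real work lies.

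Concretely, I would carry out the steps in this order: (1) record $N[u]$ and the share identity $\sum_{c} s(c) = n$; (2) prove the per-codeword bound $s(c) \le 3$ by local case analysis, cataloguing precisely the configurations with $s(c) = 3$ (and where each is deficient, i.e.\ $s(c)<3$); (3) assuming $|C| = \lceil n/3\rceil$, use an averaging/deficiency count to show the total deficiency $\sum_c (3 - s(c)) = 3|C| - n$ is too small to accommodate any irregularity, forcing the code to be globally periodic of the extremal type; (4) analyze the extremal periodic pattern as a finite word over $\Z_n$ and show its periodicity is incompatible with $n \equiv 2,3,5 \pmod 6$, producing two non-codewords with identical $I$-sets; (5) conclude $|C| \ge \lceil n/3\rceil + 1$, which with the known upper bound gives equality. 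The delicate parts are getting the share bound tight enough that the deficiency budget $3\lceil n/3\rceil - n$ genuinely pins down the structure, and executing the wrap-around contradiction cleanly for each of the three residue classes; I would expect to handle small residues of $n$ modulo $6$ (and possibly modulo a larger period like $18$) as separate sub-cases, with the global periodicity argument being the conceptual crux that distinguishes this approach from the standard local discharging technique.
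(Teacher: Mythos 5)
Your overall strategy---bound the share locally, catalogue the extremal configurations, and then derive a global wrap-around obstruction when $n \equiv 2,3,5 \pmod 6$---is exactly the approach the paper takes. However, there is a genuine gap at the foundation of your step (2): the per-codeword bound $s(c) \le 3$ is false. Your cap only controls non-codewords with singleton $I$-set, but the codeword $c$ itself may also have $I(c) = \{c\}$ and contribute a full unit, and the locating-dominating condition places no constraint between the $I$-set of a codeword and that of a non-codeword. Concretely, in the configuration $xxxoooo\underline{x}oooxxx$ (the paper's pattern $S6$) one computes $s(c) = 1 + 1 + 1/2 + 1/2 + 1/3 = 10/3$, and in the pattern $oxoooo\underline{x}oxox*x$ ($S4$) one gets $s(c) = 37/12$; both exceed $3$. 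Consequently your deficiency sum $\sum_c (3 - s(c)) = 3|C| - n$ is not a sum of nonnegative terms and cannot ``pin down the structure'' as you describe: a single $S6$ codeword already contributes $-1/3$ of deficiency, which swamps the total budget of $1$ available when $n \equiv 2,5 \pmod 6$ (and the budget of $0$ when $n \equiv 3 \pmod 6$). The missing ingredient is a discharging step: the paper introduces shifting rules that move $5/12$, $1/3$ or $7/12$ units of share from a codeword in pattern $S1$, $S4$ or $S6$ to a specified nearby codeword whose own share is provably small, after which the modified share satisfies $\ms(c) \le 17/6$ for every codeword except those in the single rigid pattern $S3 = xxoooox\underline{x}ooooxx$, where $\ms(c) = 3$. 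Only after this redistribution does the averaging argument acquire the term-by-term sign your step (3) assumes.

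Once that is repaired, the rest of your plan matches the paper in substance: overlapping $S3$ patterns decompose into period-$6$ blocks $xxoooo$, which cannot tile $\Z_n$ when $6 \nmid n$, and a case analysis at the boundary of a maximal run of such blocks extracts a share drop exceeding the budget (any strict drop suffices for $n \equiv 3 \pmod 6$; a drop of more than $1$ is needed, and obtained, for $n \equiv 2,5 \pmod 6$). The paper does not need full global periodicity of the code, only a local analysis where the run of blocks ends, which is somewhat lighter than what you propose. Note finally that the share machinery operates only for $n > 17$; the conjecture's cases $n = 14, 15, 17$ are settled in the paper by exhaustive computer search, so your argument would need a separate treatment of these small lengths as well.
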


Concerning identifying codes, Ghebleh and Niepel~\cite{GNlidcn} claimed that $\lceil 4n/11 \rceil \leq \ID(C_n(1,3)) \leq \lceil 4n/11 \rceil +1$ if $n \equiv 8 \pmod{11}$ and $\ID(C_n(1,3)) = \lceil 4n/11 \rceil$  if $n \not\equiv 8 \pmod{11}$. However, the constructions given in their paper are erroneous as is shown in Section~\ref{SectionID}. Moreover, we prove that $\ID(C_n(1,3)) = \lceil 4n/11 \rceil + 1$ for $n \equiv 2, 5, 8 \pmod{11}$ when $n$ is large enough. Thus, we prove the following conjecture stated in~\cite{GNlidcn}.
\begin{con} \label{ConjectureID}
If $n$ is an integer such that $n \geq 19$ and $n \equiv 8 \pmod{11}$, then we have $\ID(C_n(1,3)) = \lceil 4n/11 \rceil~+1$.
\end{con}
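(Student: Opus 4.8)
The plan is to establish matching upper and lower bounds, writing $n = 11k+8$ so that $\lceil 4n/11\rceil = 4k+3$ and the target value is $4k+4$. The upper bound is the easier half. Since the constructions of Ghebleh and Niepel are erroneous, I would first produce a correct identifying code of size $4k+4$: tile the optimal period-$11$ pattern realizing density $4/11$ as many times as $n$ allows, and patch the remaining $8$ coordinates together with the wrap-around using one extra codeword. One then verifies directly that every vertex is dominated and that the map $u \mapsto I(u)$ is injective; this is a finite local check confined to the patch region, since periodicity makes the tiled part identifying automatically.

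The heart of the argument is the lower bound $\ID(C_n(1,3)) \ge 4k+4$, i.e.\ ruling out an identifying code $C$ with $|C| = 4k+3$. Here I would combine a local counting lemma with a global closure obstruction, and it is the latter that supplies the \emph{global} viewpoint advertised in the abstract. The local ingredient is that every window of $11$ consecutive vertices contains at least $4$ codewords. Setting $f(i) = |C \cap \{i, i+1, \ldots, i+10\}|$, we then have $f(i) \ge 4$ for all $i$ and $\sum_{i \in \Z_n} f(i) = 11|C|$, since each codeword lies in exactly $11$ windows. If $|C| = 4k+3$, then $\sum_i (f(i)-4) = 11(4k+3) - 4(11k+8) = 1$, so every length-$11$ window contains exactly $4$ codewords except a single exceptional window containing $5$.

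This near-equality is extremely rigid. From $f(i+1)-f(i) = [\,i+11\in C\,] - [\,i\in C\,]$ and $f \equiv 4$ away from the exceptional window, the indicator of $C$ is invariant under the shift $i \mapsto i+11$ at every coordinate except the two transitions flanking that window. Because $\gcd(11,n) = \gcd(11,8) = 1$, the shift $i \mapsto i+11$ is a single $n$-cycle on $\Z_n$, and a $0/1$ function that is constant along this cycle apart from two steps (one increase, one decrease) must be the indicator of an interval in the induced cyclic order. The final step is to show that such an almost-shift-invariant set cannot be identifying: in the bulk, where $C$ genuinely looks period-$11$, the forced structure produces two vertices with identical $I$-sets (or a vertex left uncovered), contradicting the identifying property. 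Hence no identifying code of size $4k+3$ exists, and the lower bound follows.

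I expect the main obstacle to be twofold. First, proving $f(i) \ge 4$ for \emph{every} window, rather than merely on average, requires a careful analysis of the identifying constraints on short intervals of $C_n(1,3)$, since the naive domination and packing estimates control only averages. Second, and more delicate, is converting the near-shift-invariance into an explicit pair of vertices with coinciding $I$-sets: I would classify the admissible period-$11$ patterns of four codewords that are locally identifying and then show that the unavoidable seam created by the two defect transitions, forced precisely by $n \not\equiv 0 \pmod{11}$, always destroys the injectivity of $u \mapsto I(u)$ somewhere along the cycle. Tracking this seam globally around the whole of $\Z_n$, instead of attempting to repair it locally, is exactly the step that the standard local methods fail to capture.
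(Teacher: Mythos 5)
Your upper bound sketch is fine and matches the paper's construction (Theorem~\ref{TheoremIDConstructions}): tile the period-$11$ code $C_q=\{11i+j \mid j\in\{0,1,4,5\}\}$ and patch the remaining coordinates with one extra codeword. The problem is in your lower bound, and it is not merely the technical difficulty you anticipate: your foundational local lemma is false. It is not true that every window of $11$ consecutive vertices of an identifying code in $C_n(1,3)$ contains at least $4$ codewords. The paper itself supplies counterexamples: for $n=24$ the code $C=\{0,1,2,6,9,10,15,16,19\}$ is identifying and \emph{optimal} ($|C|=9=\lceil 4n/11\rceil$), yet the window $\{3,4,\ldots,13\}$ contains only the three codewords $6,9,10$; similarly for $n=13$ with $C=\{0,1,4,7,8\}$ the window starting at $9$ contains only $0,1,4$. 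So even optimal codes can have $11$-windows with only $3$ codewords, and your identity $\sum_i (f(i)-4)=1$ then no longer forces $f\equiv 4$ with a single defect --- a window with $f=3$ can be compensated by windows with $f\ge 5$ --- which destroys the near-shift-invariance and the entire interval/rigidity argument built on it. The correct local quantity that \emph{is} controlled is not the raw codeword count in a window but the share $s(c)=\sum_{u\in N[c]}1/|I(u)|$, a weighted average, and only its mean over $C$ is bounded by $11/4$.

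The paper's actual route keeps your global philosophy but grounds it differently. Via a discharging (share-shifting) scheme, Lemmas~\ref{LemmaIDReceivesShare} and~\ref{LemmaIDShiftsShare} show the averaged share satisfies $\ms(c)\le 11/4-1/24$ for every codeword \emph{except} those belonging to two explicit extremal patterns $P$ and $P'$, where only $\ms(c)\le 11/4$ holds. If no codeword lies in such a pattern, the strengthened density $24/65$ already gives the improved bound (Lemma~\ref{LemmaNoPatterns}). Otherwise the patterns are shown to propagate into period-$11$ segments, which cannot tile $\Z_n$ when $11\nmid n$; the resulting seam is analysed case by case and always forces a total drop of more than $3/4$ in $\sum_{c}\ms(c)$ below $\tfrac{11}{4}|C|$, which is exactly enough to gain one codeword. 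If you want to salvage your plan, you would need to replace the (false) pointwise window bound by this kind of weighted, pattern-classified statement before any global rigidity argument can get off the ground.
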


In~\cite{GNlidcn}, Ghebleh and Niepel also stated as an open question what happens regarding identification and location-domination in $C_n(1,d)$ with $d > 3$. These questions have been considered in our papers~\cite{JLPlcg} and~\cite{JLPobclcg}.

The proofs of the lower bounds presented in~\cite{GNlidcn} are based on a concept of share. In Section~\ref{SectionLBShare}, we give the definition of share and also present the method for obtaining lower bounds based on the concept. As we shall see, the share is a local property of a graph. The proofs in ~\cite{GNlidcn} are as well based on local properties of the graph (which is typical for identification and location-domination). However, in this paper, we introduce a new approach which enables us to also make use of the global properties of the graph. This new approach is applied to identifying codes in Section~\ref{SectionID} and to locating-dominating codes in Section~\ref{SectionLD}.


\section{Lower bounds using share} \label{SectionLBShare}

Let $G = (V, E)$ be a simple, connected and undirected graph. Assume that $C$ is a code in $G$. The following concept of the share of a codeword has been introduced by Slater in \cite{S:fault-tolerant}. The \emph{share} of a codeword $c \in C$ is denoted by $s(c)$ and
defined as
\[
s(C; c) = s(c) = \sum_{u \in N[c]} \frac{1}{|I(C; u)|} \textrm{.}
\]
The notion of share proves to be useful in determining lower bounds of identifying and locating-dominating codes (as explained in the following paragraph).

Assume that $G$ is a finite graph and $D$ is a dominating code in $G$, i.e., $N[u] \cap D$ is non-empty for all $u \in V$. Then it is easy to conclude that $\sum_{c \in D} s(D;c) = |V|$. Assume further that $s(D;c) \leq \alpha$ for all $c \in D$. Then we have $|V| \leq
\alpha |D|$, which immediately implies
\[
|D| \geq \frac{1}{\alpha} |V| \textrm{.}
\]
Assume then that for any identifying code $C$ in $G$ we have $s(C;c) \leq \alpha$ for all $c \in C$. By the aforementioned observation, we then obtain the lower bound $|V|/\alpha$ for the size of an identifying code in $G$. In other words, by determining the maximum share for any identifying code, we obtain a lower bound for the minimum size of an identifying code. The same technique obviously applies also to locating-dominating codes. However, for our purposes, it is not enough to just consider the maximum share, but we need to use a more sophisticated method by determining an upper bound for the share of a codeword \emph{on average}. The averaging process is done by introducing a \emph{shifting scheme} to even out the shares among the codewords.

In~\cite{GNlidcn}, Ghebleh and Niepel obtain the lower bounds $\ID(C_n(1,3)) \geq \lceil 4n/11 \rceil$ and $\LD(C_n(1,3)) \geq \lceil n/3 \rceil$ using a similar technique. In their paper, it is shown that for any identifying code in $C_n(1,3)$ the share of a codeword is on average at most $11/4$ and for any locating-dominating at most $3$. These observations then lead to the previous lower bounds. In our paper, we study the maximum average share more carefully and show that the values $11/4$ and $3$ can be achieved only for \emph{specific patterns} of codewords and non-codewords. Then the improved lower bounds are obtained studying the global codeword distributions in the graphs based on these specific patterns. 

\section{Identifying codes in $C_n(1,3)$} \label{SectionID}

Let us first consider constructions of identifying codes in $C_n(1,3)$. As stated in the introduction, the code constructions given in~\cite{GNlidcn} are erroneous. For example, in the paper, it is claimed that for a positive integer $q$ the code $B_q = \{ 11i+j \mid 0 \leq i \leq q-1 \text{ and } j \in \{0,4,5,6\} \}$ is identifying in $C_{11q}(1,3)$. However, this is not the case since $I(B_q;0) = I(B_q;n-1) = \{0\}$. Notice that the constructions given for the lengths other than $n \equiv 0 \pmod{11}$ are based on $B_q$ and, hence, these constructions also have some problems. 
In most cases, the constructions can be fixed as is shown in Theorem~\ref{TheoremIDConstructions}. However, in Theorem~\ref{TheoremIDLBs}, we show that there does not exist an identifying code in $C_n(1,3)$ with $\lceil 4n/11 \rceil$ codewords for $n \equiv 2,5 \pmod{11}$ when $n$ is large enough. Recall that in~\cite{GNlidcn} it is claimed that such codes exist. In the following theorem, we give the general upper bounds and constructions for identifying codes in $C_n(1,3)$.
\begin{theorem} \label{TheoremIDConstructions}
Let $n$ be an integer such that $n \geq 11$. If $n \equiv 2,5,8 \pmod{11}$, then we have $\lceil 4n/11 \rceil \leq \ID(C_n(1,3)) \leq \lceil 4n/11 \rceil +1$, and otherwise $\ID(C_n(1,3)) = \lceil 4n/11 \rceil$.
\end{theorem}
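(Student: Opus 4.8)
The lower bound $\ID(C_n(1,3)) \geq \lceil 4n/11 \rceil$ is already available from~\cite{GNlidcn} (and is recovered by the share method recalled in Section~\ref{SectionLBShare}), so the entire content of the theorem lies in the matching \emph{upper bounds}: one must exhibit identifying codes of the stated sizes. The plan is to build these codes from a single periodic pattern of density $4/11$ and then repair it near a bounded ``seam'' whenever $11 \nmid n$. Concretely, I would start from the period-$11$ pattern $P$ on $\Z$ given by the codewords at positions $\equiv 0,1,7,8 \pmod{11}$; this is a corrected replacement for the pattern $\{0,4,5,6\}$ that failed in~\cite{GNlidcn}. For $n \equiv 0 \pmod{11}$, wrapping $P$ around $C_n(1,3)$ uses exactly $4n/11 = \lceil 4n/11 \rceil$ codewords, so only the case $11 \nmid n$ needs additional work.

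Verifying that a periodic code is identifying reduces to a finite computation. Since $N[u] = \{u-3,u-1,u,u+1,u+3\}$ spans the interval $[u-3,u+3]$, the closed neighbourhoods $N[u]$ and $N[v]$ are disjoint once $|u-v| \geq 7$ (and they meet in at most one vertex when $|u-v| \in \{5,6\}$); hence, once domination holds, two vertices with $I(u) = I(v)$ must satisfy $|u-v| \leq 6$. Thus it suffices to check, for the tiled pattern, that every vertex is dominated and that $I(u) \neq I(v)$ for all pairs at cyclic distance at most $6$, and by periodicity this is a bounded check over one period. I would carry this out once for $P$, recording all identifying sets $I(u)$ over one period together with its neighbours, to establish the base case.

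For $n = 11q + \rho$ with $1 \leq \rho \leq 10$, I would keep $P$ on all but a bounded portion of the cycle and splice in one custom segment of length $11 + \rho$, carrying $4 + \lceil 4\rho/11 \rceil$ codewords when $\rho \notin \{2,5,8\}$ and one more when $\rho \in \{2,5,8\}$, chosen so that its two ends match the boundary of $P$. Using $q-1$ copies of the standard $11$-tile and this single segment yields length $11(q-1) + (11+\rho) = n$ and a total of $4(q-1) + (4 + \lceil 4\rho/11\rceil) = \lceil 4n/11 \rceil$ codewords for $\rho \notin \{2,5,8\}$, and $\lceil 4n/11 \rceil + 1$ for $\rho \in \{2,5,8\}$. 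For each of the ten residues I would give the explicit segment and then re-run the distance-$\le 6$ verification of the previous paragraph in a neighbourhood of the two junctions, the rest of the cycle being already settled by the base case. Combined with the lower bound, this gives the stated equalities for $n \not\equiv 2,5,8 \pmod{11}$ and the two-sided estimate otherwise.

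The main obstacle is exactly this seam verification, which is precisely where the constructions of~\cite{GNlidcn} broke down: there the failure was the coincidence $I(0) = I(n-1) = \{0\}$, a collision between two vertices straddling the wrap-around. Designing each custom segment so that no such collision survives at either junction, while keeping its codeword count minimal and its boundary compatible with $P$, is the delicate point; the distance-$\le 6$ reduction keeps the analysis finite, but for every residue one must still rule out the handful of equal-cardinality identifying sets that could clash across a junction. The finitely many genuinely small lengths $n$ near $n = 11$, where the distance bound starts to interact with the wrap-around, I would dispose of by direct inspection.
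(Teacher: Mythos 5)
Your proposal follows essentially the same route as the paper: your period-$11$ tile with codewords at positions $\equiv 0,1,7,8 \pmod{11}$ is the image of the paper's base code $C_q$ (codewords at $0,1,4,5 \pmod{11}$) under the graph automorphism $x \mapsto 1-x$, your finite distance-$\leq 6$ verification matches the paper's check of the $I$-sets modulo $11$, and your seam-splicing for $11 \nmid n$ is exactly what the paper does by appending a few extra codewords among $11q,\ldots,11q+4$ (Table~\ref{TableIDCodes}). The only substantive item you defer --- exhibiting the explicit custom segment for each residue $\rho$ and checking the two junctions --- is where the remaining work lies, and the paper carries it out by listing the ten concrete codes and their cardinalities.
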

\begin{proof}
Let $n$, $q$ and $r$ be integers such that $n = 11q + r$, $q \geq 0$ and $0 \leq r < 11$. Recall first that any identifying code in $C_n(1,3)$ has at least $\lceil 4n/11 \rceil $ codewords by~\cite{GNlidcn}. For the constructions, we first define a code
\[
C_q = \{ 11i+j \mid 0 \leq i \leq q-1 \text{ and } j \in \{0,1,4,5\} \} \text{.}
\]
Let then $A$ be the following set of vertices: $A = \{3, 4, \ldots, 11q-4\}$. In Table~\ref{TableIDISets}, we have listed the identifying sets $I(C_q;u)$ and their reductions modulo $11$ for all $u \in A$ depending on the remainder when $u$ is divided by $11$. Comparing the identifying sets $I(C_q;u) \pmod{11}$, we immediately observe that $I(C_q;u) \neq I(C_q;v)$ for all $u,v \in A$ and $u \not\equiv v \pmod{11}$. Moreover, if $u \equiv v \pmod{11}$ and $u \neq v$, then $I(C_q;u) \neq I(C_q;v)$ as $N[u] \cap N[v] = \emptyset$. This implies that $C_q$ is an identifying set in $C_{11q}(1,3)$ since it is straightforward to verify that $I(C_q;u)$ are also non-empty and unique for all $u \in \{0,1,2,11q-3, 11q-2, 11q-1\}$. Similarly, it can be shown that the codes given in Table~\ref{TableIDCodes} are identifying in $C_n(1,3)$. Observe that the cardinalities of the identifying codes are also given in the table. Therefore, as the cardinalities meet the ones given in the claim, the proof is concluded.
\begin{table}
\begin{center}
\begin{tabular}{c|c|c}
$u \in A \pmod{11}$ & $I(C_q;u)$  & $I(C_q;u) \pmod{11}$\\
\hline
$0$ & $\{u,u+1\}$ & $\{0,1\}$ \\
$1$ & $\{u-1,u,u+3\}$ & $\{0,1,4\}$ \\
$2$ & $\{u-1,u+3\}$ & $\{1,5\}$ \\
$3$ & $\{u-3,u+1\}$ & $\{0,4\}$ \\
$4$ & $\{u-3,u,u+1\}$ & $\{1,4,5\}$ \\
$5$ & $\{u-1,u\}$ & $\{4,5\}$ \\
$6$ & $\{u-1\}$ & $\{5\}$ \\
$7$ & $\{u-3\}$ & $\{4\}$ \\
$8$ & $\{u-3,u+3\}$ & $\{0,5\}$ \\
$9$ & $\{u+3\}$ & $\{1\}$ \\
$10$ & $\{u+1\}$ & $\{0\}$ \\
\end{tabular}
\end{center}
\caption{Identifying sets $I(C_q;u)$ and their reductions modulo $11$ for all $u \in A$} \label{TableIDISets}
\end{table}
\begin{table}
\begin{center}
\begin{tabular}{c|c|c}
$n$ & identifying code $C$  & $|C|$\\
\hline
$11q$ & $C_q$ & $4q = \lceil 4n/11 \rceil$ \\
$11q + 1$ & $C_q \cup \{11q \}$ & $4q+1 = \lceil 4n/11 \rceil$ \\
$11q + 2$ & $C_q \cup \{11q, 11q+1 \}$ & $4q+2 = \lceil 4n/11 \rceil + 1$ \\
$11q + 3$ & $C_q \cup \{11q, 11q+1 \}$ & $4q+2 = \lceil 4n/11 \rceil$ \\
$11q + 4$ & $C_q \cup \{11q, 11q+1 \}$ & $4q+2 = \lceil 4n/11 \rceil$ \\
$11q + 5$ & $C_q \cup \{11q, 11q+1, 11q+2 \}$ & $4q+3 = \lceil 4n/11 \rceil + 1$ \\
$11q + 6$ & $C_q \cup \{11q, 11q+1, 11q+2 \}$ & $4q+3 = \lceil 4n/11 \rceil$ \\
$11q + 7$ & $C_q \cup \{11q, 11q+1, 11q+3 \}$ & $4q+3 = \lceil 4n/11 \rceil$ \\
$11q + 8$ & $C_q \cup \{11q, 11q+1, 11q+2, 11q+3 \}$ & $4q+4 = \lceil 4n/11 \rceil + 1$ \\
$11q + 9$ & $C_q \cup \{11q, 11q+1, 11q+2, 11q+3 \}$ & $4q+4 = \lceil 4n/11 \rceil$ \\
$11q + 10$ & $C_q \cup \{11q, 11q+1, 11q+3, 11q+4 \}$ & $4q+4 = \lceil 4n/11 \rceil$ \\
\end{tabular}
\end{center}
\caption{Identifying codes in $C_n(1,3)$ for $n = 11q+r$ and their cardinalities} \label{TableIDCodes}
\end{table}
\end{proof}

The general constructions given in the previous theorem can be improved for certain lengths $n$. These smaller identifying codes are given in Table~\ref{TableIDSpecificCodes}. It is straightforward to verify that these codes are indeed identifying. Observe also that the codes are optimal, i.e., attain the lower bound $\lceil 4n/11 \rceil$.
\begin{table}
\begin{center}
\begin{tabular}{c|c|c}
$n$ & identifying code $C$  & $|C|$\\
\hline
13 & $\{ 0,1,4,7,8 \}$ & $\lceil 4n/11 \rceil = 5$ \\
16 & $\{ 0,1,4,7,10,11 \}$ & $\lceil 4n/11 \rceil = 6$ \\
24 & $\{ 0,1,2,6,9,10,15,16,19 \}$ & $\lceil 4n/11 \rceil = 9$ \\
27 & $\{ 0,1,2,6,9,12,13,18,19,22 \}$ & $\lceil 4n/11 \rceil = 9$ \\
35 & $\{ 0, 1, 6, 9, 10, 15, 16, 19, 24, 25, 26, 30, 34 \}$ & $\lceil 4n/11 \rceil = 13$ \\
\end{tabular}
\end{center}
\caption{Identifying codes in $C_n(1,3)$ for certain lengths $n$ improving the general constructions} \label{TableIDSpecificCodes}
\end{table}

\medskip

\begin{figure}[t]
\centering
\includegraphics[width=350pt]{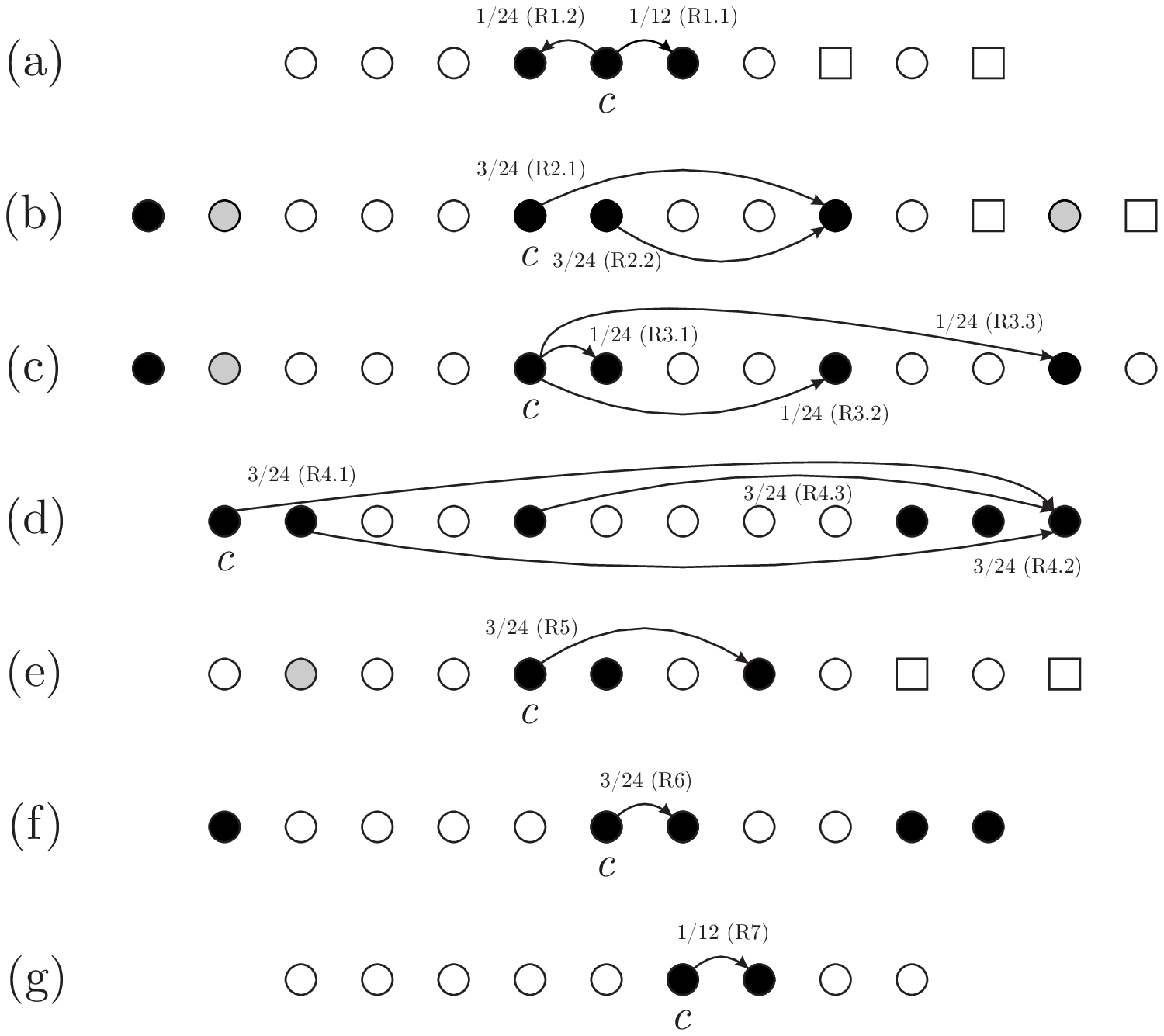}
\caption{The rules of the shifting scheme illustrated. The black dots represent codewords, the white dots represent non-codewords, and the grey dots can be either codewords or non-codewords. In the figures~(a), (b) and (e), at least one of the vertices marked with a white square is a codeword. Notice that the edges of the circulant graph are omitted in the figure.}
\label{IDRuleFigure}
\end{figure}
In what follows, we concentrate on improving the lower bound of $\ID(C_n(1,3))$ for $n \equiv 2, 5, 8 \pmod{11}$. For the rest of the section, assume first that $C$ is an identifying code in the circulant graph $C_n(1,3)$. For the lower bound on $|C|$, we introduce a shifting scheme to even out the share among the codewords as explained in Section~\ref{SectionLBShare}. The rules of the shifting scheme are illustrated in Figure~\ref{IDRuleFigure}. In addition to the rules shown in the figure, we also have rules which are obtained by reflecting the figures over the line passing vertically through the codeword $c$. For example, corresponding to Figure~\ref{IDRuleFigure}(a), we also have the symmetrical rules~R1.1' and R1.2'. In what follows, we describe more carefully how share is shifted by the rules:
\begin{itemize}
\item Let $c$ be a codeword such that its surroundings are as in Figure~\ref{IDRuleFigure}(a). In other words, $\{c-1, c, c+1\} \subseteq C$, $\{c-4, c-3, c-2, c+2, c+4\} \cap C = \emptyset$ and at least one of $c+3$ and $c+5$ is a codeword. Now $1/12$ units of share is shifted from $c$ to $c+1$ by the rule~R1.1 and $1/24$ units of share to $c-1$ by the rule~R1.2. Symmetrically, if $c$ is a codeword such that its surroundings are as Figure~\ref{IDRuleFigure}(a) when it is reflected over the line passing vertically through $c$, i.e., we have $\{c-1, c, c+1\} \subseteq C$, $\{c-2,c-4,c+2,c+3,c+4\} \cap C = \emptyset$ and at least one of $c-5$ and $c-3$ is a codeword, then $1/12$ units of share is shifted from $c$ to $c-1$ by the rule~R1.1' and $1/24$ units of share to $c+1$ by the rule~R1.2'.
\item If $c$ is a codeword such that its surroundings are as in Figure~\ref{IDRuleFigure}(b), then $3/24$ units of share is shifted to $c+4$ from $c$ by the rule~R2.1 and from $c+1$ by the rule~R2.2. In the symmetrical case, we have the analogous rules~R2.1' and R2.2'.
\item If $c$ is a codeword such that its surroundings are as in Figure~\ref{IDRuleFigure}(c), then $1/24$ units of share is shifted from $c$ to $c+1$ by the rule~R3.1, to $c+4$ by the rule~R3.2 and to $c+7$ by the rule~R3.3. In the symmetrical case, we have the analogous rules~R3.1', R3.2' and R3.3'.
\item If $c$ is a codeword such that its surroundings are as in Figure~\ref{IDRuleFigure}(d), then $3/24$ units of share is shifted to $c+11$ from $c$ by the rule~4.1, from $c+1$ by the rule~R4.2 and from $c+4$ by the rule~R4.3. In the symmetrical case, we have the analogous rules~R4.1', R4.2' and R4.3'.
\item If $c$ is a codeword such that its surroundings are as in Figure~\ref{IDRuleFigure}(e), then $3/24$ units of share is shifted from $c$ to $c+3$ by the rule~R5. In the symmetrical case, we have the analogous rule~R5'.
\item If $c$ is a codeword such that its surroundings are as in Figure~\ref{IDRuleFigure}(f), then $3/24$ units of share is shifted from $c$ to $c+1$ by the rule~R6. In the symmetrical case, we have the analogous rule~R6'.
\item If $c$ is a codeword such that its surroundings are as in Figure~\ref{IDRuleFigure}(g), then $1/12$ units of share is shifted from $c$ to $c+1$ by the rule~R7. In the symmetrical case, we have the analogous rule~R7'.
\end{itemize}
The modified share of a codeword $c \in C$, which is obtained after the shifting scheme has been applied, is denoted by $\ms(c)$. The usage of the shifting scheme is illustrated in the following example.
\begin{example}
Consider the identifying code $C_q$ in $C_{11q}(1,3)$. Observe first that we have $s(11i) = 1 + 3 \cdot 1/2 + 1/3 = 17/6 = 11/4 + 1/12$, $s(11i+1) = 1 + 2 \cdot 1/2 + 2 \cdot 1/3 = 8/3 = 11/4 - 1/12$, $s(11i+4) = 1 + 2 \cdot 1/2 + 2 \cdot 1/3 = 11/4 - 1/12$ and $s(11i+5) = 1 + 3 \cdot 1/2 + 1/3 = 11/4 + 1/12$ when $0 \leq i \leq q-1$. By the shifting scheme, $1/12$ units of share is shifted from each codeword $11i$ with $0 \leq i \leq q-1$ to $11i+1$ according to the rule~R7 and $1/12$ units of share is shifted from $11i+5$ to $11i+4$ according to the rule~R7'. Thus, after shifting scheme has been applied we have $\ms(c) = 11/4$ for all $c \in C_q$.
\end{example}

Let $u$ be a vertex in $C_n(1,3)$. We say that the consecutive vertices $u, u+1, \ldots, u+8$ form a \emph{pattern} $P$ (resp. $P'$) if $\{u+2, u+3\} \subseteq C$ and $\{u,u+1,u+4,u+5,u+6,u+7,u+8\} \cap C = \emptyset$ (resp. $\{u+5, u+6\} \subseteq C$ and $\{u,u+1,u+2,u+3,u+4,u+7,u+8\} \cap C = \emptyset$). Furthermore, we say that a codeword $c \in \Z_n$ belongs to a pattern $P$ (resp. $P'$) if $c$ is one of the codewords $u+2$ or $u+3$ (resp. $u+5$ or $u+6$) for some pattern $P$ (resp. $P'$). Observe that all the codewords in the identifying code $C_q$ belong to some pattern $P$ or $P'$. In what follows, we first show that after the shifting scheme has been applied the averaged share $\ms(c) \leq 65/24 = 11/4 - 1/24$ for any $c \in C$ unless the codeword $c$ belongs to some pattern $P$ or $P'$ when we have $\ms(c) \leq 11/4$. Recall that in~\cite{GNlidcn} Ghebleh and Niepel have shown using similar (albeit simpler) methods that on average the share of a codeword is at most $11/4$. Their method is based on a close study of connected components of codewords. Our refinement of the upper bound, which is based on recognizing the codewords achieving the upper bound of $11/4$ units of share, is essential to improving the lower bound for the lengths $n \equiv 2,5,8 \pmod{11}$ (as is shown later).

In what follows, we present two auxiliary lemmas for obtaining an upper bound on $\ms(u)$; in the first one, we consider codewords receiving share according to some rule and, in the second one, we study codewords not receiving any share. In the following lemma, we begin by presenting an upper bound on $\ms(u)$ when $u$ is a codeword receiving share according to some rule.
\begin{lemma} \label{LemmaIDReceivesShare}
Let $C$ be an identifying code in $C_n(1,3)$ and $u \in C$ be a codeword such that $u$ receives share according to the previous rules. If $u$ belongs to some pattern $P$ or $P'$, then we have $\ms(u) \leq 11/4$, and otherwise $\ms(u) \leq 65/24 = 11/4 - 1/24$.
\end{lemma}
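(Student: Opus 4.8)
The plan is to write the modified share as a single bookkeeping identity
\[
\ms(u) = s(u) - S_{\mathrm{out}}(u) + S_{\mathrm{in}}(u),
\]
where $S_{\mathrm{out}}(u)$ is the total amount shifted away from $u$ by the rules of Figure~\ref{IDRuleFigure} and $S_{\mathrm{in}}(u) > 0$ is the amount $u$ receives, which is nonzero by hypothesis. The first step is to control the untouched share $s(u) = \sum_{w \in N[u]} 1/|I(C;w)|$. Since $N[u] = \{u-3,u-1,u,u+1,u+3\}$ has exactly five elements and $u \in C$ forces $|I(C;u)| \ge 1$, the quantity $s(u)$ can approach $11/4$ only when several of the five vertices $w \in N[u]$ have very small identifiers. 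Here the identification property is the essential tool: distinct vertices have distinct $I$-sets, so a given codeword can be the sole dominator of at most one vertex. This caps how many $w \in N[u]$ can have $|I(C;w)| = 1$ and, more generally, bounds how many small identifiers can accumulate inside $N[u]$. I would first record, once and for all, the admissible profiles of $\bigl(|I(C;w)|\bigr)_{w \in N[u]}$ together with the corresponding values of $s(u)$.

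Next I would organise the argument by the receiving rule (or rules). Each rule of Figure~\ref{IDRuleFigure} sends share to a vertex lying at a fixed offset from its source codeword $c$, so the assertion that $u$ receives share pins down a local configuration of codewords and non-codewords around $u$. For instance, $u$ can receive $1/12$ by R7 only when $u$ and its neighbour sit in configuration~(g), and it can receive $1/8$ by R6 only in configuration~(f); in each case the figure fixes most of $N[u]$ and of the block sending the share. For every rule in turn --- R1.1, R1.2, R2.1, R2.2, R3.1--R3.3, R4.1--R4.3, R5, R6, R7 and their reflected counterparts --- I would read off the forced neighbourhood, compute the resulting $s(u)$ using the profiles from the first step, add the corresponding $S_{\mathrm{in}}(u)$, and subtract whatever $S_{\mathrm{out}}(u)$ the same forced configuration compels $u$ to pass onward.

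The dichotomy in the statement is then governed by whether the forced configuration places $u$ among the two adjacent codewords of a pattern $P$ or $P'$; the canonical instance of this is reception via R7 or R7', exactly as in the treatment of $C_q$ in the example. In the pattern case the surrounding non-codewords keep the identifiers in $N[u]$ large enough that $s(u)$ together with the received share never exceeds $11/4$, so $\ms(u) \le 11/4$. When instead the receiving configuration is incompatible with any pattern, the additional non-codeword (or codeword) structure it forces either enlarges one of the identifier counts contributing to $s(u)$ or compels $u$ to shed share; in every such case the net bookkeeping yields $\ms(u) \le 65/24 = 11/4 - 1/24$, the slack $1/24$ being precisely one unit of the smallest transfer used by the scheme.

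The main obstacle I anticipate is combinatorial rather than conceptual: a single codeword $u$ may be eligible to receive share from several rules at once --- for example simultaneously as the destination of one rule and as the destination of a reflected rule, or at two distinct offsets --- and one must check that these contributions are mutually compatible and that their sum never overshoots the claimed bound. Handling this requires verifying that the local configurations demanded by two receiving rules can coexist only when the combined forced neighbourhood keeps $s(u)$ small enough to absorb the total $S_{\mathrm{in}}(u)$; in other words, whenever $u$ collects a large amount of incoming share, the very constraints producing that share also depress $s(u)$ correspondingly. Systematically ruling out the dangerous overlaps, and confirming the pattern versus non-pattern split in each surviving combination, is where the bulk of the case analysis will lie.
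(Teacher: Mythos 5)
Your plan coincides with the paper's proof of this lemma: it proceeds rule by rule over the ways $u$ can receive share, uses the local configuration forced by each receiving rule (together with the fact that at most one vertex of $N[u]$ can have a singleton identifier) to bound $s(u)$, checks which other rules can simultaneously send share to $u$, and singles out R7/R7' as the only receiving situation that reaches $11/4$ --- exactly the case where $u$ belongs to a pattern $P$ or $P'$. The one caveat is that the decisive content of the lemma is the exhaustive compatibility casework you defer to the end (the paper's proof consists precisely of that verification, and for this lemma it only ever uses $\ms(u)\le s(u)+S_{\mathrm{in}}(u)$, never the outgoing term), but the skeleton you describe is the one that succeeds.
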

\begin{proof}
Let $C$ be an identifying code in $C_n(1,3)$ and $u \in C$ be a codeword such that $u$ receives share according to some rule. 
The proof now divides into different cases depending on which rule(s) are applied to $u$. 

Suppose first that share is shifted to $u$ according to the rule~R1.1. Observe first that $|I(u+1)|\geq 3$ and $|I(u+3)| \geq 2$ since $u+2$ or $u+4$ belongs to $C$. Therefore, we have $s(u) \leq 3 \cdot 1/2 + 2 \cdot 1/3 \leq 13/6 = 11/4 - 7/12$. Furthermore, since $\{u-2,u-1\} \subseteq C$ and at least one of $u+2$ and $u+4$ is a codeword, we obtain that in addition to the rule~R1.1, $u$ can receive share only according to the rules~R1.2', R4.1, R4.2 and R4.3. Therefore, $\ms(u) \leq s(u) + 1/12 + 1/24 + 3 \cdot 3/24 \leq 8/3 = 11/4 - 2/24$ and we are done. If $u$ receives share according to the symmetrical rule~R1.1', then we are again done since the reasoning is analogous to the considered case.

Suppose that $u$ receives share according to the rule~R1.2; the case with the symmetrical rule~R1.2' is analogous. Now, as $u-1 \notin C$, $\{u+1,u+2\} \subseteq C$, and at least one of $u+4$ and $u+6$ is a codeword, it is straightforward to check that (in addition to~R1.2) $u$ can receive share only according to the rule~R1.1'. However, the case where $u$ receives share according to the rule~R1.1' has already been considered above. Hence, we may assume that share is received only according to the rule~R1.2. Thus, as $|I(u+3)|\geq 3$, we obtain that $s(u) \leq 1+2\cdot 1/2 + 2 \cdot 1/3 = 8/3 = 11/4 - 1/12$. Therefore, $\ms(u)  \leq s(u) + 1/24 \leq 11/4 - 1/24$ and we are done.

Suppose that $u$ receives share according to the rules~R2.1 and R2.2 (the case with the rules~R2.1' and R2.2' is analogous). Observe that since $u+2$ or $u+4$ is a codeword, at least one of the vertices $u+1$ and $u+3$ is adjacent to $3$ codewords as otherwise $I(u+1) = I(u+3)$. Therefore, we obtain that $s(u) \leq 3 \cdot 1/2 + 2 \cdot 1/3 = 13/6 = 11/4 - 7/12$. Furthermore, comparing the surroundings of $u$ to the ones in other rules, it can be deduced that (besides the rules~R2.1 and R2.2) $u$ can receive share only according to the rules~R2.1' and R2.2'. This implies that $\ms(u) \leq s(u) + 4 \cdot 3/24 =  11/4 - 2/24$ and we are done.

Suppose that $u$ receives share according to the rule~R3.1 (the case with R3.1' is analogous). Now it straightforward to check that $u$ cannot receive share according to any other rule. Furthermore, we have $s(u) = 1 + 2 \cdot 1/2 + 2 \cdot 1/3 = 8/3 = 11/4 - 2/24$. Therefore, we have $\ms(u) \leq s(u) + 1/24 = 11/4 - 1/24$ and we are done.

Suppose that $u$ receives share according to the rule~R3.2 (the case with R3.2' is analogous). Now we have $s(u) \leq 1 + 2 \cdot 1/2 + 2 \cdot 1/3 = 11/4 - 2/24$ as $|I(u-3)| = |I(u)| = 3$. Therefore, if share is not shifted to $u$ by any other rule, then we are immediately done since $\ms(u) \leq s(u) + 1/24 \leq 11/4 - 1/24$. Furthermore, it is straightforward to verify that in addition $u$ can only receive share according to the rule~R3.3'. Then $u-4$, $u-3$, $u+3$, $u+6$ and $u+7$ are codewords and $s(u) \leq 1 + 1/2 + 3 \cdot 1/3 = 5/2 = 11/4 - 1/4$. Therefore, we have $\ms(u) \leq s(u) + 2 \cdot 1/24 \leq 11/4 - 4/24$ and we are done.

Suppose that $u$ receives share according to the rule~R3.3 (the case with R3.3' is analogous). Observe first that if $u+2$ and $u+4$ are both non-codewords, then a contradiction follows as $I(u-1) = I(u+1) = \{u\}$. Hence, we may assume that $u+2$ or $u+4$ is a codeword. Therefore, one of the $I$-sets $I(u+1)$ and $I(u+3)$ contains at least $3$ codewords. Thus, we have $s(u) \leq 1 + 2 \cdot 1/2 + 2 \cdot 1/3 \leq 11/4 - 2/24$. Observe that the rule~R3.2' is the only other rule according to which $u$ can receive; in particular, notice that share cannot be received by the rule~3.3' since $u+2$ or $u+4$ is a codeword. Furthermore, the case where share is received according to the rule~R3.2' has already been considered above.

Suppose that $u$ receives share according to the rules~R4.1, R4.2 and R4.3 (the case with R4.1', R4.2' and R4.3' is analogous). Observe first that $u+1$, $u+2$ or $u+4$ belongs to $C$ since $I(u-3) \neq I(u+1)$. 
This implies that $s(u) \leq 3 \cdot 1/2 + 2 \cdot 1/3 \leq 13/6 = 11/4 - 7/12$. Furthermore, if $u$ receives no share according to any other rule or receives share according to the rule~1.1, then we are immediately done as in the case of the rule~R1.1. The only other possibility for $u$ to receive share is according to the rules~R4.1', R4.2' and R4.3'. However, in this case, the vertices $u-2$, $u-1$, $u$, $u+1$ and $u+2$ are all codewords. This implies that $s(u) \leq 2 \cdot 1/2 + 1/3 + 2 \cdot 1/4 = 11/6$. Therefore, we have $\ms(u) \leq s(u) + 6 \cdot 3/24 \leq 49/20 = 11/4 - 1/6 = 11/4 - 4/24$ and we are done.

Suppose that $u$ receives share according to the rule~R5 (the case with R5' is analogous). Now $u$ cannot receive share according to any other rule. Furthermore, as $u+2$ or $u+4$ is a codeword, we obtain that $s(u) \leq 3 \cdot 1/2 + 2 \cdot 1/3 = 13/6 = 11/4 - 7/12$. Therefore, we are immediately done since $\ms(u) \leq s(u) + 3/24 \leq 11/4 - 11/24$.

Suppose that $u$ receives share according to the rule~R6 (the case with R6' is analogous). Now it straightforward to verify that $u$ does not receive share according to any other rule. Furthermore, as $|I(u)| = 3$ and $|I(u+3)| \geq 3$, we immediately obtain that $s(u) \leq 3 \cdot 1/2 + 2 \cdot 1/3 = 13/6 = 11/4 - 14/24$. Hence, we are immediately done since $\ms(u) \leq s(u) + 3/24 \leq 11/4 - 1/24$.

Suppose that $u$ receives share according to the rule~R7 (the case with R7' is analogous). Again $u$ cannot receive according to any other rule. Observe first that $u+3$ and $u+4$ are codewords since $I(u-1) \neq I(u)$ and $I(u-3) \neq I(u+1)$. Therefore, as $|I(u)| \geq 3$ and $|I(u+3)| \geq 3$, we immediately obtain that $s(u) \leq 1 + 2 \cdot 1/2 + 2 \cdot 1/3 = 8/3 = 11/4 - 2/24$. Thus, we have $\ms(u) \leq s(u) + 1/12 \leq 11/4$. However, now this is enough since $u$ belongs to a pattern $P'$. Thus, in conclusion, the claim follows.
\end{proof}

In the following lemma, we give an upper bound on $\ms(u)$ when $u$ is a codeword not receiving share according to any rule.
\begin{lemma} \label{LemmaIDShiftsShare}
Let $C$ be an identifying code in $C_n(1,3)$ and $u \in C$ be a codeword such that $u$ does not receive share according to any of the previous rules. If $u$ belongs to some pattern $P$ or $P'$, then we have $\ms(u) \leq 11/4$, and otherwise $\ms(u) \leq 65/24 = 11/4 - 1/24$.
\end{lemma}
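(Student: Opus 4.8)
The plan is to reduce everything to a bound on the ordinary share $s(u)$ and then to account for the share leaving $u$. Since $u$ receives no share, its modified share is $\ms(u)=s(u)-t$, where $t\ge 0$ is the total amount shifted \emph{out} of $u$; in particular $\ms(u)\le s(u)$. It therefore suffices to bound $s(u)=\sum_{w\in N[u]}1/|I(w)|$, keeping the quantity $t$ in reserve for the extremal configurations.

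The key tool is the identifying property restricted to $N[u]$. As $u\in C$, we have $u\in I(w)$ for every $w\in N[u]$, so each summand $1/|I(w)|$ is at most $1$, with equality exactly when $I(w)=\{u\}$. Since distinct vertices of an identifying code have distinct identifying sets, at most one $w\in N[u]$ can satisfy $I(w)=\{u\}$. Hence at most one of the five summands of $s(u)$ equals $1$. If none of them does, then $s(u)\le 5\cdot\tfrac12=\tfrac52<\tfrac{65}{24}$ and we are done; so I would assume there is a unique $w_0\in N[u]$ with $I(w_0)=\{u\}$.

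Next I would split into cases according to the position of $w_0$, equivalently according to which of $u\pm1,u\pm3$ lie in $C$. The recurring mechanism is that two distinct vertices never share an identifying set, which repeatedly rules out the smallest possible $I$-sets. For instance, if $w_0=u$, so that $u$ has no codeword neighbour, then $I(u)=\{u\}$ forces each of $u\pm1,u\pm3$ to have $|I|\ge 2$, and a short check via the same separation arguments gives $s(u)\le\tfrac{65}{24}$; in particular $u$ lies in no pattern. Carrying out the analogous arguments for the remaining positions of $w_0$ yields $s(u)\le\tfrac{65}{24}$ in every configuration except when $u$ belongs to some pattern $P$ or $P'$.

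It remains to settle the pattern case, which I would treat representatively through a pattern $P$ with codewords $w+2,w+3$ and $u=w+3$ (the codeword $u=w+2$ then has $s(u)\le\tfrac83<\tfrac{11}{4}$, and pattern $P'$ is obtained by reflection). Here $I(w+4)=\{w+3\}$; separating $w+4$ from $w+6$ forces $w+9\in C$, separating the codewords $w+2$ and $w+3$ forces $w-1\in C$, and separating $w+1$ from $w+5$ forces $w-2\in C$. With these forced codewords one has $|I(w+2)|=3$, $|I(w+6)|=2$ and $|I(w)|\ge 2$, so $s(u)=1/|I(w)|+\tfrac13+\tfrac12+1+\tfrac12\le\tfrac{17}{6}$. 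Moreover the surroundings of $u$ now match rule~R7', whence $t\ge\tfrac1{12}$ and $\ms(u)\le\tfrac{17}{6}-\tfrac1{12}=\tfrac{11}{4}$, as required. I expect the main difficulty to lie in this middle step: one must pass through every location of $w_0$ and every admissible codeword configuration around $u$ and, in each near-extremal case, argue through the identifying property either that the share already drops below $\tfrac{65}{24}$ or that the configuration is exactly a pattern whose forced neighbours trigger rule~R7 or R7' and supply the missing $\tfrac1{12}$.
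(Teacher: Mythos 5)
Your opening reduction is sound and arguably cleaner than the paper's (the paper instead first disposes of the cases $u\pm 2\in C$ and then splits on $|I(u)|$): since $u\in I(w)$ for every $w\in N[u]$ and identifying sets are pairwise distinct, at most one of the five summands of $s(u)$ equals $1$, so $s(u)\le 5/2<65/24$ unless some $w_0\in N[u]$ has $I(w_0)=\{u\}$. Your handling of the case $w_0=u$ and of the pattern case (forcing $w-2$, $w-1$, $w+9$ into $C$, getting $s(u)\le 17/6$, and discharging $1/12$ via R7$'$) agrees, up to reflection, with what the paper does.

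The genuine gap is the claim that the remaining case analysis ``yields $s(u)\le 65/24$ in every configuration except when $u$ belongs to some pattern $P$ or $P'$,'' together with the closing assertion that the only outgoing rules ever needed are R7/R7$'$ in the pattern case. Both are false, and the configurations where they fail are exactly where the paper's proof does most of its work. Concretely, take $I(u)=\{u,u+1\}$ with $u-5,u+5\in C$ and $u-4,u-3,u-2,u-1,u+2,u+3\notin C$. The identifying property forces $u+4\in C$ (else $I(u)=I(u+1)$) and $u-6\in C$ (else $I(u-3)=I(u-1)=\{u\}$), and one checks $I(u-1)=\{u\}$, $|I(u-3)|=|I(u)|=2$, $|I(u+1)|=3$, and $|I(u+3)|=2$ when $u+6\notin C$, so $s(u)=1+3\cdot\tfrac12+\tfrac13=\tfrac{17}{6}=\tfrac{68}{24}>\tfrac{65}{24}$; yet $u$ lies in no pattern ($P$ is excluded by $u+4\in C$, $P'$ by $u-5\in C$). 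The paper closes this case only by shipping $3/24$ out of $u$ via rule R6, and analogous non-pattern configurations with $s(u)$ as large as $17/6$ occur for $I(u)=\{u-3,u\}$ (closed by R4.3), $I(u)=\{u,u+1\}$ with $u+5\notin C$ (closed by R2.1, by R3.1--R3.3, or by R4.1 depending on $u+6,u+7,u+8$), $I(u)=\{u-1,u,u+3\}$ (R2.1 or R4.2), $I(u)=\{u,u+1,u+3\}$ (R5), and $I(u)=\{u-1,u,u+1\}$ (R1.1 and R1.2). Since your argument never invokes any of R1--R6, it cannot get below $65/24$ in these cases; repairing it requires, for each such near-extremal configuration, using the identifying property to force the additional codewords that make one of those rules fire and remove at least $1/24$ of share from $u$ --- which is precisely the content of the paper's case analysis.
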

\begin{proof}
Let $C$ be an identifying code in $C_n(1,3)$ and $u \in C$ be a codeword such that $u$ does not receive share according to the rules. Observe first that if $u+2$ is a codeword, then we are immediately done since at least two of the $I$-sets $I(u-1)$, $I(u+1)$ and $I(u+3)$ consists of at least three codewords implying $\ms(u) \leq s(u) \leq 1 + 2 \cdot 1/2 + 2 \cdot 1/3 =  8/3 = 11/4 - 2/24$. The same argument also applies for $u-2 \in C$. Hence, we may assume that $u-2$ and $u+2$ do not belong to $C$. Now the proof divides into the following cases depending on the number of codewords in $I(u)$:
\begin{itemize}
\item Suppose first that $|I(u)|=1$, i.e., $I(u) = \{u \}$. The previous observation taken into account, we now know that $u-3$, $u-2$, $u-1$, $u+1$, $u+2$ and $u+3$ are non-codewords. Therefore, as $I(u) \neq I(u-1)$ and $I(u) \neq I(u+1)$, we obtain that $u-4$ and $u+4$ belong to $C$. Furthermore, since $I(u-3) \neq I(u-1) = \{u-4,u\}$ and $I(u+3) \neq I(u+1) = \{u, u+4\}$, we have $|I(u-3)| \geq 3$ and $|I(u+3)| \geq 3$. Hence, we have $\ms(u) \leq s(u) \leq 1 + 2 \cdot 1/2 + 2 \cdot 1/3 = 11/4 - 2/24$ and we are done.

\item Suppose then that $|I(u)|=2$. Now we have a further split into the cases with $I(u) = \{u-3,u\}$ and $I(u) = \{u,u+1\}$ (the cases with $I(u) = \{u+3,u\}$ and $I(u) = \{u-1,u\}$ are analogous). Consider first the case with $I(u) = \{u-3,u\}$. If now $u+4 \in C$, then $|I(u-3)| \geq 3$ and $|I(u+3)| \geq 3$ since $I(u-3) \neq I(u)$ and $I(u+1) \neq I(u+3)$ and we are done as $\ms(u) \leq s(u) \leq 1 + 2 \cdot 1/2 + 2 \cdot 1/3  = 11/4 - 2/24$. Hence, we may assume that $u+4 \notin C$. Therefore, as $I(u-1) \neq I(u+1) = \{u\}$, we have $u-4 \in C$. Furthermore, since $I(u+2) \neq \emptyset$, $I(u+1) \neq I(u+3)$ and $I(u+2) \neq I(u+4)$, we obtain respectively that $u+5$, $u+6$ and $u+7$ belong to $C$. Now $3/24$ units of share is shifted from $u$ to $u+7$ according to the rule~R4.3. Thus, we have $\ms(u) \leq s(u) - 3/24 \leq 1 + 3 \cdot 1/2 + 1/3 - 3/24 =  11/4 - 1/24$.

    For the other case, suppose that $I(u) = \{u,u+1\}$. Observe first that $u+4$ belongs to $C$ since $I(u) \neq I(u+1)$. It suffices to assume that $u-4 \notin C$ since otherwise $\ms(u) \leq s(u) \leq 3 \cdot 1/2 + 2 \cdot 1/3 = 13/6 = 11/4 - 7/12$ and we are done. If now $u-5 \notin C$, then $u+5 \in C$ as $I(u+2) \neq I(u-2) = \{u\}$ and $1/12$ units of share is shifted from $u$ to $u+1$ according to the rule~R7. Therefore, $\ms(u) \leq s(u) - 1/12 \leq 1 + 3 \cdot 1/2 + 1/3 - 1/12 =  11/4$ and we are done since $u$ belongs to a pattern~$P'$. Hence, we may assume that $u-5$ is a codeword. If $u+5$ is a codeword, then $3/24$ units of share is shifted from $u$ to $u+1$ according to the rule~R6 and we are again done since $\ms(u) \leq s(u) - 3/24 \leq 1 + 3 \cdot 1/2 + 1/3 - 3/24 =  11/4-1/24$. Hence, we may assume that $u+5 \notin C$. If at least one of $u+6$ and $u+8$ is a codeword, then $3/24$ units of share is shifted from $u$ to $u+4$ according to the rule~R2.1 Thus, we have $\ms(u) \leq s(u) - 3/24 \leq 1 + 3 \cdot 1/2 + 1/3 - 3/24 =  11/4-1/24$ and we are done. Hence, we may assume that $u+6$ and $u+8$ do not belong to $C$. If $u+7 \in C$, then $1/24$ units of share is shifted from $u$ to $u+1$, $u+4$ and $u+7$ according to the rules~R3.1, R3.2 and R3.3, respectively. Therefore, we have $\ms(u) \leq s(u) - 3 \cdot 1/24 \leq 1 + 3 \cdot 1/2 + 1/3 - 3/24 =  11/4-1/24$ and we are done. Hence, we may assume that $u+7$ is a non-codeword. Thus, since $I(u+6) \neq \emptyset$, $I(u+5) \neq I(u+7)$ and $I(u+6) \neq I(u+8)$, we obtain respectively that $u+9$, $u+10$ and $u+11$ belong to $C$. Now $3/24$ units of share is shifted from $u$ to $u+11$ according to the rule~R4.1. Therefore, we are again done since $\ms(u) \leq 11/4 - 1/24$. This concludes the proof of the current case.

\item Suppose then that $|I(u)|=3$. Observe first that if for some $v \in N(u)$ we have $|I(v)| \geq 3$, then we are immediately done since $\ms(u) \leq s(u) \leq 1 + 2 \cdot 1/2 + 2 \cdot 1/3 = 11/4 - 2/24$. Now, for $|I(u)|=3$, we have the following essentially different cases (others are analogous): $I(u) = \{u-3,u,u+3\}$, $I(u) = \{u-1,u,u+3\}$, $I(u) = \{u,u+1,u+3\}$ and $I(u) = \{u-1,u,u+1\}$. For future considerations, recall that the vertices $u-2$ and $u+2$ do not belong to $C$. Consider first the case with $I(u) = \{u-3,u,u+3\}$. By the previous observation, we may assume that $u-4$ and $u+4$ do not belong to $C$. However, this implies a contradiction since $I(u-1) = I(u+1) = \{u\}$.

    Consider then the case with $I(u) = \{u-1,u,u+3\}$. By the previous observation, we may assume that $u-4$, $u+4$ and $u+6$ are non-codewords. Thus, since $I(u-3) \neq I(u+1)  = \{u\}$, $u-6$ is a codeword. If $u+5$ or $u+7$ is a codeword, then $3/24$ units of share is shifted from $u$ to $u+3$ by the rule~R2.1. Therefore, we are done as $\ms(u) \leq s(u) - 3/24 \leq 1 + 3 \cdot 1/2 + 1/3 - 3/24 =  11/4-1/24$. Hence, we may assume that $u+5$ and $u+7$ do not belong to $C$. Thus, since $I(u+5) \neq \emptyset$, $I(u+4) \neq I(u+6)$ and $I(u+5) \neq I(u+7)$, we obtain respectively that $u+8$, $u+9$ and $u+10$ belong to $C$. Now $3/24$ units of share is shifted from $u$ to $u+10$ according to the rule~R4.2. Therefore, we are again done since $\ms(u) \leq 11/4 - 1/24$.

    Suppose then that $I(u) = \{u,u+1,u+3\}$. By the previous observation, we may assume that $u+4$ and $u+6$ are non-codewords. If $u-4 \in C$, then we are immediately done since $\ms(u) \leq s(u) \leq 3 \cdot 1/2 + 2 \cdot 1/3 = 13/6 = 11/4 - 7/12$. Hence, we may assume that $u+4 \notin C$. Now $u+5$ or $u+7$ belongs to $C$ as otherwise $I(u+2) = I(u+4) = \{u+1,u+3\}$. Therefore, $3/24$ units of share is shifted from $u$ to $u+3$ according to the rule~R5. Thus, we are done as $\ms(u) \leq s(u) - 3/24 \leq 1 + 3 \cdot 1/2 + 1/3 - 3/24 =  11/4-1/24$.

    Finally, suppose that $I(u) = \{u-1,u,u+1\}$. Now at least one of $u-5$ and $u+5$ is a codeword since $I(u-2) \neq I(u+2)$. Without loss of generality, we may assume that $u+5 \in C$. Then $1/24$ and $1/12$ units of share is shifted from $u$ to $u-1$ and $u+1$ according to the rules~R1.2 and R1.1, respectively. Therefore, we are done since $\ms(u) \leq s(u) - 1/24- 1/12 \leq 1 + 3 \cdot 1/2 + 1/3 - 3/24 =  11/4-1/24$.

\item Suppose then that $|I(u)|=4$. The proof now divides into the following essentially different cases: $I(u) = \{u-1, u, u+1, u+3\}$ and $I(u) = \{u-3, u, u+1, u+3\}$. In the former case, we may first assume that $u-4$ and $u+4$ are non-codewords by a similar argument as in the previous case. Then $1/24$ and $1/12$ units of share is shifted from $u$ to $u-1$ and $u+1$ according to the rules~R1.2 and R1.1, respectively. Therefore, we are done since $\ms(u) \leq s(u) - 1/24- 1/12 \leq 1 + 3 \cdot 1/2 + 1/4 - 3/24 =  11/4-3/24$.

    Suppose now that $I(u) = \{u-3, u, u+1, u+3\}$. By the previous observations, we may assume that $u-4$, $u+4$ and $u+6$ are non-codewords. Now $u+5$ or $u+7$ belongs to $C$ since $I(u+2) \neq I(u+4)$. Therefore, $3/24$ units of share is shifted from $u$ to $u+3$ according to the rule~R5. Thus, we are done as $\ms(u) \leq s(u) - 3/24 \leq 1 + 3 \cdot 1/2 + 1/4 - 3/24 =  11/4-3/24$.

\item Finally, suppose that $|I(u)| = 5$, i.e., $I(u) = \{u-3, u-1, u, u+1, u+3\}$. Now we are immediately done since $\ms(u) \leq s(u) \leq 1+ 3 \cdot 1/2 + 1/5 = 27/10 = 11/4 - 1/20 \leq 11/4 - 1/24$. This concludes the proof of the claim.
\end{itemize}
\end{proof}

In conclusion, the previous lemmas state that any codeword $c$ not belonging to a pattern~$P$ or $P'$ has $\ms(c) \leq 11/4 - 1/24$. In the following lemma, we consider the case where $C$ is an identifying code such that no codeword belongs to one of the patterns.
\begin{lemma} \label{LemmaNoPatterns}
Let $C$ be an identifying code in $C_n(1,3)$ such that no codeword of $C$ belongs to a pattern~$P$ or $P'$. Then the following results hold:
\begin{itemize}
\item If $n=11q_1+2$ with $q_1 \geq 5$, then $|C| \geq 4q_1+2 = \lceil 4n/11 \rceil +1$.
\item If $n=11q_2+5$ with $q_2 \geq 3$, then $|C| \geq 4q_2+3 = \lceil 4n/11 \rceil +1$.
\item If $n=11q_3+8$ with $q_3 \geq 1$, then $|C| \geq 4q_3+4 = \lceil 4n/11 \rceil +1$.
\end{itemize}
\end{lemma}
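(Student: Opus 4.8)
The plan is to combine the two preceding lemmas with the conservation of total share under the shifting scheme. The first observation I would make is that every rule of the shifting scheme transfers share \emph{from a codeword to a codeword}; no share is ever moved to a non-codeword. Hence the shifting scheme merely redistributes share among the elements of $C$, and the global sum is preserved:
\[
\sum_{c \in C} \ms(c) = \sum_{c \in C} s(c) = |V| = n,
\]
where the middle equality is the standard identity $\sum_{c \in C} s(c) = |V|$ recalled in Section~\ref{SectionLBShare}. Being careful here that the total really is invariant is the one subtle point; the rest is essentially bookkeeping.

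Next I would invoke the two lemmas. Every codeword of $C$ either receives share according to some rule or it does not, so each $c \in C$ falls under exactly one of Lemma~\ref{LemmaIDReceivesShare} and Lemma~\ref{LemmaIDShiftsShare}. By the standing hypothesis no codeword belongs to a pattern $P$ or $P'$, so in both lemmas the stronger alternative applies, giving $\ms(c) \leq 65/24$ for \emph{every} $c \in C$. Substituting this uniform bound into the identity above yields $n \leq \tfrac{65}{24}|C|$, that is,
\[
|C| \geq \frac{24n}{65}.
\]

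Finally I would turn this global inequality into the three stated bounds. Writing $n = 11q_i + r$, we have $\lceil 4n/11\rceil = 4q_i + \lceil 4r/11\rceil$, so in each case the claim is exactly $|C| > \lceil 4n/11\rceil$; since $|C|$ is an integer bounded below by $24n/65$, it suffices to check that $24n/65$ strictly exceeds $\lceil 4n/11\rceil$. This reduces to a single linear inequality in $q_i$ per case: for $r=2$ one needs $264q_1+48 > 260q_1+65$, i.e. $q_1 \geq 5$; for $r=5$ one needs $264q_2+120 > 260q_2+130$, i.e. $q_2 \geq 3$; and for $r=8$ one needs $264q_3+192 > 260q_3+195$, i.e. $q_3 \geq 1$. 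These thresholds match the ranges in the statement precisely, so no genuine obstacle remains — the entire force of the lemma is carried by the refined per-codeword bound $65/24$ supplied by the two auxiliary lemmas, and the lower bounds on $q_i$ are exactly what is needed to push the resulting estimate past $\lceil 4n/11\rceil$.
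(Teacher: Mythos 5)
Your proposal is correct and follows essentially the same route as the paper: both arguments apply Lemmas~\ref{LemmaIDReceivesShare} and~\ref{LemmaIDShiftsShare} to get the uniform bound $\ms(c) \leq 65/24$, use conservation of total share under the shifting scheme to obtain $n \leq \tfrac{65}{24}|C|$, and finish with the same case-by-case arithmetic on $q_i$. Your explicit remark that the rules only move share between codewords (so the total is preserved) is a point the paper leaves implicit, but it does not change the argument.
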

\begin{proof}
Let $C$ be an identifying code in $C_n(1,3)$ such that no codeword of $C$ belongs to a pattern~$P$ or $P'$. Denote $n = 11q + r$, where $q$ is a nonnegative integer and $r$ is an integer such that $0 \leq r < 11$. By Lemmas~\ref{LemmaIDReceivesShare} and \ref{LemmaIDShiftsShare}, we know that $\ms(c) \leq 65/24$ for all $c \in C$. Therefore, we obtain that
\[
n = \sum_{c \in C} s(c) = \sum_{c \in C} \ms(c) \leq \frac{65}{24}|C| \text{.}
\]
This further implies that
\[
|C| \geq \frac{24}{65}n = \frac{24}{65}\left(11q+r\right) = 4q + \frac{4q+24r}{65} \text{.}
\]
The rest of the proof now divides into the following cases:
\begin{itemize}
\item If $n=11q_1+2$ with $q_1 \geq 5$, then $|C| \geq 4q_1 + (4q_1+24 \cdot 2)/65 \geq 4q_1 + 68/65$. Therefore, we have $|C| \geq \lceil 4q_1 + 68/65 \rceil = 4q_1 + 2 = \lceil 4n/11 \rceil +1$.
\item If $n=11q_2+5$ with $q_2 \geq 3$, then $|C| \geq 4q_2 + (4q_2+24 \cdot 5)/65 \geq 4q_2 + 132/65$. Therefore, we have $|C| \geq \lceil 4q_2 + 132/65 \rceil = 4q_2 + 3 = \lceil 4n/11 \rceil +1$.
\item If $n=11q_3+8$ with $q_3 \geq 1$, then $|C| \geq 4q_3 + (4q_3+24 \cdot 8)/65 \geq 4q_3 + 196/65$. Therefore, we have $|C| \geq \lceil 4q_3 + 196/65 \rceil = 4q_3 + 4 = \lceil 4n/11 \rceil +1$.
\end{itemize}
\end{proof}

In the following theorem, we improve the lower bound on $\LD(C_n(1,3))$ for lengths $n$ such that $n$ is large enough and $n \equiv 2, 5, 8 \pmod{11}$.
\begin{theorem} \label{TheoremIDLBs}
Let $n$ be a positive integer such that $n = 11q_1 + 2$ with $q_1 \geq 5$, $n = 11q_2 + 5$ with $q_2 \geq 3$, or $n = 11q_3 + 8$ with $q_3 \geq 1$. Now we have
\[
\ID(C_n(1,3)) \geq \left\lceil \frac{4n}{11} \right\rceil +1 \text{.}
\]
\end{theorem}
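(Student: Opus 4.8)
The plan is to argue by contradiction. Since the bound $\ID(C_n(1,3)) \geq \lceil 4n/11 \rceil$ from~\cite{GNlidcn} always holds, it suffices to show that no identifying code $C$ with $|C| = \lceil 4n/11 \rceil$ exists for the stated values of $n$. The first step is immediate from Lemma~\ref{LemmaNoPatterns}: if no codeword of such a $C$ belonged to a pattern $P$ or $P'$, that lemma would already give $|C| \geq \lceil 4n/11 \rceil + 1$, a contradiction. Hence I may assume that some codeword lies in a pattern, and, using the reflective symmetry of the shifting rules, that it lies in a pattern $P$, say with codewords $a$ and $a+1$.

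The core of the argument is a propagation analysis establishing that a single pattern forces a long, rigid, periodic chain. Starting from the pattern $P$ on $\{a, a+1\}$, domination of the vertices to its right forces codewords at prescribed positions; for instance $a+5$ has no codeword neighbour inside the pattern, so domination forces $a+8 \in C$. Tracing the domination and identification constraints one vertex at a time, I would show that either a codeword appears that provably belongs to no pattern, or the next codeword pair is exactly $\{a+7, a+8\}$ and forms a pattern $P'$, which in turn forces the pair $\{a+11, a+12\}$ forming a pattern $P$, and so on. In the ``clean'' case this forces $C$ to consist precisely of the residues $\{0,1,4,5\} \pmod{11}$ along the whole cycle, i.e.\ to be a rotation of the periodic code $C_q$ from Theorem~\ref{TheoremIDConstructions}.

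The contradiction then comes from a global residue obstruction. A well-defined assignment of the residue pattern $\{0,1,4,5\} \pmod{11}$ around all of $\Z_n$ requires $11 \mid n$; since $n \equiv 2, 5, 8 \pmod{11}$, the forward propagation cannot close up consistently, so the chain must break along a bounded ``defect'' arc whose length is $\equiv r \pmod{11}$ with $r \in \{2,5,8\}$. On this arc the periodic structure approaches from both sides, and I would analyze directly how it can be completed to an identifying code: the key claim is that for these residues the defect arc necessarily carries one codeword more than its length times the periodic density $4/11$ would allow, which yields $|C| \geq \lceil 4n/11 \rceil + 1$. It is worth stressing that the crude averaging of Lemmas~\ref{LemmaIDReceivesShare} and~\ref{LemmaIDShiftsShare} alone does not close the gap — it only bounds the number of non-pattern codewords by a constant — so the rigidity supplied by propagation is essential.

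The hard part will be the propagation step together with the defect-cost estimate. For propagation I must carry out a finite but intricate case analysis ruling out every way a pattern could be followed by something other than the expected successor pattern without creating a forced extra codeword; the branches in which two codewords are pushed into consecutive positions (incompatible with any pattern) are the delicate ones. For the defect I must show that the residues $2, 5, 8$ — precisely the lengths for which the general construction of Theorem~\ref{TheoremIDConstructions} already needs an extra codeword — genuinely force a full additional codeword rather than a mere bounded share deficit. The thresholds $q_1 \geq 5$, $q_2 \geq 3$, $q_3 \geq 1$ are what guarantee the cycle is long enough for the chain to develop and for the defect analysis to apply; the finitely many smaller lengths, for which optimal codes of size $\lceil 4n/11 \rceil$ do exist as in Table~\ref{TableIDSpecificCodes}, are genuine exceptions and fall outside the claim.
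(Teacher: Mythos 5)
Your skeleton coincides with the paper's: dispose of the pattern-free case via Lemma~\ref{LemmaNoPatterns}, observe that a pattern $P$ forces codewords at $a+7,a+8$ (the paper derives both from $I(u+6)\neq I(u+4)$ and $I(u+7)\neq\emptyset$) so that maximal runs of patterns propagate with period $11$, and then extract a contradiction from $11\nmid n$ at the point where a run must break. Your observation that the crude averaging only bounds the number of non-pattern codewords (by $18$, since each contributes a deficit of at least $1/24$ against a needed total deficit of $3/4$) is also correct and is exactly why a local analysis at the break is required. The divergence is in how the cost of the break is quantified: the paper stays inside the share/discharging framework and proves, by an explicit case analysis of the configurations that can follow the last pattern of a run (cases (A1)--(A2) and (B1)--(B5)), that the codewords near the break satisfy $\sum_{c}\ms(c)<\tfrac{11}{4}|C|-\tfrac34$, which immediately yields $|C|>\tfrac{4}{11}(n+\tfrac34)$ and hence the claim for each residue.

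The genuine gap is that your replacement for this step --- ``the defect arc necessarily carries one codeword more than its length times the periodic density $4/11$ would allow'' --- is precisely the entire content of the theorem and is asserted rather than proved; you yourself label both the propagation case analysis and the defect-cost estimate as ``the hard part'' and do not execute either. Moreover, the counting formulation is imprecise in ways that matter: $4\ell/11$ is not an integer, there may be several maximal pattern runs and hence several defect arcs among which the required excess of one codeword must be distributed (your singular ``the defect arc'' does not address this), and a short defect arc with, say, $18$ or fewer non-pattern codewords cannot be handled by density considerations alone --- one is forced back into exactly the kind of configuration-by-configuration analysis the paper performs, for which the share deficits of Lemmas~\ref{LemmaIDReceivesShare} and~\ref{LemmaIDShiftsShare} are the natural currency. (Note also that the thresholds $q_1\geq 5$, $q_2\geq 3$, $q_3\geq 1$ are needed only in the pattern-free branch via Lemma~\ref{LemmaNoPatterns}; the break analysis itself works for all lengths, so attributing them to ``the chain developing'' misplaces where they enter.) As it stands the proposal identifies the right structure but does not close the argument.
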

\begin{proof}
Let $C$ be an identifying code in $C_n(1,3)$. Recall that if no codeword of $C$ belongs to a pattern~$P$ or $P'$, then the claim immediately follows by Lemma~\ref{LemmaNoPatterns}. Hence, we may assume that there exist codewords of $C$ belonging to a pattern~$P$ or $P'$. Observe that if $u+2$ and $u+3$ are codewords belonging to a pattern $P$, then $u-2$, $u-1$, $u+9$ and $u+10$ belong to $C$ since $I(u+1) \neq I(u+5) = \{u+2\}$, $I(u+2) \neq I(u+3) = \{u+2, u+3\}$, $I(u+6) \neq I(u+4) = \{u+3\}$ and $I(u+7) \neq \emptyset$, respectively. Analogously, it can be shown that if $u+5$ and $u+6$ are codewords belonging to a pattern $P'$, then $u-2$, $u-1$, $u+9$ and $u+10$. Suppose first that all the codewords belong to a pattern~$P$ or $P'$. The previous observation implies that the code is formed by consecutive repetitions of $P$ and $P'$. (Indeed, if $u+2$ and $u+3$ are codewords belonging to a pattern $P$, then the codewords $u-2$ and $u-1$ as well as $u+9$ and $u+10$ belong to patterns $P'$.) Observe that consecutive patterns~$P$ and $P'$ form a segment of length $11$ (with $4$ codewords) similar to the identifying code $C_q$ given in Theorem~\ref{TheoremIDConstructions}. However, as now $n$ is not divisible by $11$, the identifying code $C$ cannot entirely be formed by the segments of length $11$.  Thus, we obtain that all the codewords cannot belong to a pattern~$P$ or $P'$. In other words, after a (finite) repetition of patterns~$P$ and $P'$, a codeword not belonging to the patterns has to appear. In what follows, we first show that the end of the repetition of the patterns~$P$ and $P'$ implies a drop of strictly more than $3/4$ units of share in the sum $\sum_{c \in C} \ms(c)$ compared to the average share of $11/4$, i.e., $\sum_{c \in C} \ms(c) < \tfrac{11}{4} |C| - \tfrac{3}{4}$. Based on this observation, we then show that the original lower bound of $\lceil 4n/11 \rceil$ can be improved by one.

Suppose first that the repetition of the patterns ends with a pattern~$P$. More precisely, let $u-7$ and $u-6$ be codewords belonging to a pattern~$P$, and assume that the next codeword to the right does not belong to a pattern~$P'$. Recall that due to the pattern $P$ the vertices $u-9$, $u-8$, $u-5$, $u-4$, $u-3$, $u-2$ and $u-1$ are non-codewords. Now $u$ and $u+1$ belong to $C$ since $I(u-3) \neq I(u-5) = \{u-6\}$ and $I(u-2) \neq \emptyset$, respectively. By the assumption that $u$ (and $u+1$) do no belong to a pattern~$P'$, we can deduce that $u+2$ or $u+3$ is a codeword of $C$. These two cases are considered in the following:
\begin{itemize}
\item[(A1)] Suppose first that $u+2 \in C$. If $u+3 \in C$, then no share is shifted to $u$ according to any rule and we have $\ms(u) \leq s(u) \leq 2 \cdot 1/2 + 3\cdot 1/3 =  2 = 11/4 -3/4$. Furthermore, by Lemmas~\ref{LemmaIDReceivesShare} and \ref{LemmaIDShiftsShare}, we have $\ms(u+1) \leq 11/4-1/24$. Therefore, we are done since $\ms(u) + \ms(u+1) < 2 \cdot 11/4 - 3/4$. Hence, we may assume that $u+3$ does not belong to $C$. Now at least one of $u+4$ and $u+6$ is a codeword since $I(u-1) \neq I(u+3)$. Now we have $s(u) \leq 3 \cdot 1/2 + 2 \cdot 1/3 = 11/4 - 7/12$ and similarly $s(u+2) \leq 11/4 - 7/12$. Moreover, it is straightforward to verify that $u$ and $u+2$ can receive share only according to the rules~R1.2 and R1.1, respectively. Therefore, we obtain that $\ms(u) + \ms(u+2) \leq (s(u) + 1/24) + (s(u+2) + 1/12) \leq 2 \cdot 11/4 - 25/24 < 2 \cdot 11/4 - 3/4$. This concludes the first case of the proof.

\item[(A2)] Suppose then that $u+2 \notin C$ and $u+3 \in C$. Observe first that $u+4$ or $u+7$ is a codeword since otherwise $I(u+2) = I(u+4) = \{u+1, u+3\}$ (a contradiction). Suppose first that $u+4$ is a codeword. Observe then that $|I(v)| \geq 3$ for all $v \in \{u, u+1, u+3, u+4\}$. Therefore, we have $s(v) \leq 1 + 1/2 + 3 \cdot 1/3$ for all $v \in \{u, u+1, u+3, u+4\}$. It is straightforward to verify that $u$ and $u+1$ do not receive share according to any rule. Moreover, either $u+3$ or $u+4$ can receive share according to the rules~R4.1', R4.2' and R4.3'. Furthermore, if this happens for one of the vertices, say $v$, then we have $\ms(v) \leq 11/4 - 1/24$ by the previous lemmas and the other one does not receive share according to the rules. Thus, all the previous combined, we are done since $\ms(u) + \ms(u+1) + \ms(u+3) + \ms(u+4) \leq 4 \cdot 11/4 - 3 \cdot 1/4 - 1/24 < 4 \cdot 11/4 - 3 \cdot 1/4$. Hence, we may assume that $u+4 \notin C$ and $u+7 \in C$.

    Suppose that $u+6$ is a codeword. Then it is straightforward to verify that $u+3$ can receive share only according to the rules~R2.1' and R2.2'. Therefore, since $s(u+3) \leq 1/2 + 4 \cdot 1/3 = 11/6$, we obtain that $\ms(u+3) \leq s(u+3) + 2 \cdot 3/24 \leq 11/4 - 16/24$. Thus, we are done as $\ms(u) + \ms(u+1) + \ms(u+3) + \ms(u+6) \leq 3(11/4 - 1/24) + 11/4 - 16/24 = 4 \cdot 11/4 -19/24 < 4 \cdot 11/4 - 3/4$. Hence, we may assume that $u+6 \notin C$. Suppose then that $u+5$ or $u+9$ is a codeword; denote the codeword by $v$. Now we have $s(u+3) \leq 2 \cdot 1/2 + 3 \cdot 1/3$. Moreover, $u+3$ can receive only $3/24$ units of share according to the rule~R5. Therefore, we obtain that $\ms(u) + \ms(u+1) + \ms(u+3) + \ms(u+7) + \ms(v) \leq 4(11/4 - 1/24) + 11/4 - 15/24 = 4 \cdot 11/4 - 19/24 < 4 \cdot 11/4 - 3/4$. Hence, we may assume that $u+5$ and $u+9$ are both non-codewords. Now $u+8$ is a codeword since $I(u+5) \neq \emptyset$. Furthermore, at least one of $u+10$ and $u+12$ is a codeword, say $v$, since $I(u+5) \neq I(u+9)$. Now we have $s(u+3) \leq 3 \cdot 1/2 + 2 \cdot 1/3 = 11/4 - 14/24$ and as above $u+3$ can receive only $3/34$ units of share according to the rule~R5. Moreover, we have $s(w) \leq 1 + 2 \cdot 1/2 + 2 \cdot 1/3 = 11/4 - 2/24$ for any $w \in \{u+1, u+7, u+8\}$ and none of the codewords receive share according to any rule. Furthermore, we have $\ms(u) \leq 11/4 - 1/24$ and $\ms(v) \leq 11/4 - 1/24$ by Lemmas~\ref{LemmaIDReceivesShare} and \ref{LemmaIDShiftsShare} since neither of the vertices $u$ and $v$ belongs to a pattern~$P$ or $P'$. Thus, combining the previous observation, we obtain that $\ms(u) + \ms(u+1) + \ms(u+3) + \ms(u+7) + \ms(u+8) + \ms(v) \leq 2(11/4 - 1/24) + 3(11/4 - 2/24) + (11/4 - 14/24 + 3/24) = 6 \cdot 11/4 - 19/24 < 6 \cdot 11/4 - 3/4$. This concludes the proof of the current case.
\end{itemize}

Suppose then that the repetition of the patterns ends with a pattern~$P'$. More precisely, let $u-4$ and $u-3$ be codewords belonging to a pattern~$P'$, and assume that the next codeword to the right does not belong to a pattern~$P$. Recall that due to the pattern $P'$ the vertices $u-9$, $u-8$, $u-7$, $u-6$, $u-5$, $u-2$ and $u-1$ are non-codewords. Now $u$ and $u+1$ belong to $C$ since $I(u-3) \neq I(u-4) = \{u-4,u-3\}$ and $I(u-2) \neq I(u-6) = \{u-3\}$, respectively. By the assumption that $u$ (and $u+1$) do no belong to a pattern~$P$, we can deduce that one of the vertices $u+2$, $u+3$, $u+4$, $u+5$ and $u+6$ is a codeword of $C$. The proof now divides into the following five cases:
\begin{itemize}
\item[(B1)] Suppose that $u+2 \in C$. Now we have $s(u) \leq 1/2 + 4 \cdot 1/3 = 11/4 - 22/24$.  Furthermore, $u$ can receive share only according to the rules~R4.1', R4.2' and R4.3'. Obviously, if $u$ receives no share, then we are immediately done as $\ms(u) \leq s(u) \leq 11/4 - 22/24 \leq 11/4 - 3/4$. Hence, we may assume that share is shifted to $u$ according to the rules~R4.1', R4.2' and R4.3'. This implies that $u+7$, $u+10$ and $u+11$ are codewords. Therefore, we have $s(u+1) \leq 3 \cdot 1/2 + 2 \cdot 1/3 = 11/4 - 14/24$ and $u+1$ cannot receive share according to any rule. Thus, we are done since $\ms(u) + \ms(u+1) \leq (11/4 - 22/24 + 3 \cdot 3/24) + (11/4 - 14/24) = 2 \cdot 11/4 - 27/24 \leq 2 \cdot 11/4 - 3/4$.

\item[(B2)] Suppose that $u+2 \notin C$ and $u+3 \in C$. Now we have $s(u) \leq 3 \cdot 1/2 + 1/3 + 1/4 = 11/4 - 16/24$ and similarly $s(u+1) \leq 11/4 - 16/24$ (as $I(u+2) \neq I(u+4)$).  Hence, as neither $u$ nor $u+1$ receives share according to any rule, we obtain that $\ms(u) + \ms(u+1) \leq s(u) + s(u+1) \leq 2 (11/4 - 16/24) < 2 \cdot 11/4 - 3/4$. Thus, we are done.

\item[(B3)] Suppose that $u+2, u+3 \notin C$ and $u+4 \in C$. Now we have $s(u) \leq 2 \cdot 1/2 + 3 \cdot 1/3 = 2 = 11/4 - 3/4$. Furthermore, $u$ does not receive share according to any rule. Therefore, as $u+1$ does not belong to any pattern $P$ or $P'$, we are done since $\ms(u) + \ms(u+1) \leq (11/4 - 3/4) + (11/4 - 1/24) = 2 \cdot 11/4 - 19/24 < 2 \cdot 11/4 - 3/4$.

\item[(B4)] Suppose that $u+2, u+3, u+4 \notin C$ and $u+5 \in C$. Observe first that $u+7 \in C$ since $I(u+4) \neq I(u+2) = \{u+1, u+5\}$. Now we have $s(u) \leq 1 + 2 \cdot 1/2 + 2 \cdot 1/3 = 11/4 - 2/24$ and $s(u+1) \leq 3 \cdot 1/2 + 2 \cdot 1/3 = 11/4 - 14/24$. Furthermore, neither $u$ nor $u+1$ receives share according to any rule. Moreover, at least one of $u+6$, $u+8$ and $u+11$, say $v$, is a codeword since $I(u+6) \neq I(u+8)$. Observe that if $v = u+6$ or $v = u+8$, then $v$ does not belong to any pattern~$P$ or $P'$. Assuming $u+6$ and $u+8$ do not belong to $C$, then $v = u+11$ does not belong to $P$ or $P'$. Therefore, we are done as  $\ms(u) + \ms(u+1) + \ms(u+5) + \ms(u+7) + \ms(v) \leq (11/4 - 2/24) + (11/4 - 14/24) + 3(11/4 - 1/24) = 5 \cdot 11/4 - 19/24 < 5 \cdot 11/4 - 3/4$.

\item[(B5)] Finally, suppose that $u+2, u+3, u+4, u+5 \notin C$ and $u+6 \in C$. Observe first that $u+7 \in C$ since $I(u+4) \neq I(u+2) = \{u+1\}$. This implies that $s(u) \leq 3 \cdot 1/2 + 2 \cdot 1/3 = 11/4 - 14/24$. It is also straightforward to verify that $u$ can only receive $3/24$ units of share according to the rule~R6'. Therefore, we have $\ms(u) \leq s(u) + 3/24 \leq 11/4 - 11/24$. Furthermore, since $I(u+6) \neq I(u+7)$, we know that at least one of $u+8$, $u+9$ and $u+10$ has to be a codeword. Suppose first that $u+8 \in C$. Now we have $s(u+6) \leq 3 \cdot 1/2 + 2 \cdot 1/3 = 11/4 - 14/24$ (as $I(u+5) \neq I(u+9)$), and $u+6$ can only receive $1/24$ units of share according to the rule~R1.2. (In particular, notice that if share is shifted to $u+6$ according to the rules~R4.1', R4.2' or R4.3', then $I(u+5) = I(u+9)$ implying a contradiction.) Thus, we have $\ms(u) + \ms(u+6) \leq (11/4 - 11/24) + (11/4 - 14/24 + 1/24) \leq 2 \cdot 11/4 - 1 < 2 \cdot 11/4 - 3/4$. Hence, we may assume that $u+8 \notin C$. Suppose then that $u+9 \in C$. Now we have $s(u+7) \leq 3 \cdot 1/2 + 2 \cdot 1/3 = 11/4 - 14/24$, and $u+7$ cannot receive share according to any rule. Therefore, we are done as $\ms(u) + \ms(u+7) \leq (11/4 - 11/24) + (11/4 - 14/24) = 2 \cdot 11/4 - 25/24 < 2 \cdot 11/4 - 3/4$. Hence, we may assume that $u+9 \notin C$ and $u+10 \in C$.

    Suppose first that $u+11 \in C$. Now we have $s(u+7) \leq 3 \cdot 1/2 + 2 \cdot 1/3 = 11/4 - 14/24$, and $u+7$ can receive share only according to the rule~R6 ($3/24$ units). Therefore, we are done since $\ms(u) + \ms(u+7) \leq (11/4 - 11/24) + (11/4 - 14/24 + 3/24) = 2 \cdot 11/4 - 22/24 < 2 \cdot 11/4 - 3/4$. Hence, we may assume that $u+11 \notin C$. Suppose then that $u+12 \in C$ or $u+14 \in C$. This implies that $s(u+10) \leq 3 \cdot 1/2 + 2 \cdot 1/3 = 11/4 - 14/24$. Furthermore, $u+10$ receives share according to the rules~R2.1 ($3/24$ units) and R2.2 ($3/24$ units), and it can possibly receive share also by the rules~R2.1' ($3/24$ units) and R2.2' ($3/24$ units). If no share is shifted to $u+10$ according to the rules~R2.1' and R2.2', then we are done since $\ms(u) + \ms(u+10) \leq (11/4 - 11/24) + (11/4 - 14/24 + 2 \cdot 3/24) = 2 \cdot 11/4 - 19/24 < 2 \cdot 11/4 - 3/4$. Hence, we may assume that $u+10$ receives share also according to the rules~R2.1' and R2.2'. This implies that $u+13$, $u+14$ and $u+19$ are codewords of $C$. Observe that the codewords $u+1$, $u+6$, $u+10$, $u+13$, $u+14$ and $u+19$ do not belong to any pattern~$P$ or $P'$. In particular, $u+19$ does not belong to $P$ or $P'$ since $u+15 \notin C$. Thus, all the previous taken into account, we obtain that $\ms(u) + \ms(u+1) + \ms(u+6) + \ms(u+7) + \ms(u+10) + \ms(u+13) + \ms(u+14) + \ms(u+19) \leq (11/4 - 11/24) + (11/4 - 14/24 + 4 \cdot 3/24) + 6(11/4 - 1/24) = 8 \cdot 11/4 - 19/24 < 8\cdot 11/4 - 3/4$. Hence, we may assume that $u+12 \notin C$ and $u+14 \notin C$.

    Suppose that $u+13 \notin C$. Now $u+15$, $u+16$ and $u+17$ belong to $C$ since $I(u+12) \neq \emptyset$, $I(u+13) \neq I(u+11) = \{u+10\}$ and $I(u+14) \neq I(u+12) = \{u+15\}$, respectively. Furthermore, at least one of the vertices $u+18$, $u+19$ and $u+21$, say $v$, is a codeword since $I(u+14) \neq I(u+18)$. Thus, if $v=u+18$, $v=u+19$, or $v=u+21$ and $v$ does not belong to $P$ or $P'$, then we are done as $\ms(u) + \ms(u+1) + \ms(u+6) + \ms(u+7) + \ms(u+10) + \ms(u+15) + \ms(u+16) + \ms(u+17) + \ms(v) \leq (11/4 - 11/24) +  8(11/4 - 1/24) = 9 \cdot 11/4 - 19/24 < 9 \cdot 11/4 - 3/4$ (none of the other codewords either belong to a pattern~$P$ or $P'$). Hence, we may assume that $v$ belongs to a pattern~$P$ or $P'$. This implies that $u+18, u+19 \notin C$ and $v = u+21$. Now $u+20$ also belongs to the pattern~$P$ and the codewords $u+15$, $u+16$ and $u+17$ form a case symmetrical to the case~(B1). Hence, we may assume that $u+13 \in C$. Now $u+17 \in C$ because $I(u+14) \neq I(u+12)$. It is straightforward to verify that $u+13$ can now receive share only according to the rules~R3.3 ($1/24$ units) and R3.2' ($1/24$ units). If $u+15$ is a codeword, then $s(u+13) \leq 2 \cdot 1/2 + 3 \cdot 1/3 = 11/4 - 3/4$. Furthermore, if $u+16 \in C$, then we have $s(u+13) \leq 1 + 1/2 + 3 \cdot 1/3 = 11/4 - 6/24$. Thus, in both cases, we have $\ms(u+13) \leq s(u+13) + 2 \cdot 1/24 \leq 11/4 - 4/24$. Therefore, all the previous taken into account, we are done since $\ms(u) + \ms(u+1) + \ms(u+6) + \ms(u+7) + \ms(u+10) + \ms(u+13) + \ms(u+17) \leq (11/4 - 11/24) +  (11/4 - 4/24) + 5(11/4 - 1/24) = 7 \cdot 11/4 - 20/24 < 7 \cdot 11/4 - 3/4$ (none of the codewords belong to a pattern~$P$ or $P'$). Hence, we may assume that $u+15$ and $u+16$ are non-codewords. Now $u+18$ and $u+19$ belong to $C$ since $I(u+15) \neq \emptyset$ and $I(u+16) \neq I(u+14) = \{u+13, u+17\}$, respectively. Therefore, we have $\ms(u) + \ms(u+1) + \ms(u+6) + \ms(u+7) + \ms(u+10) + \ms(u+13) + \ms(u+17) + \ms(u+18) + \ms(u+19) \leq (11/4 - 11/24) + 8(11/4 - 1/24) = 9 \cdot 11/4 - 19/24 < 7 \cdot 11/4 - 3/4$ (again none of the codewords belong to a pattern~$P$ or $P'$). Thus, in conclusion, we achieve a drop of more than $3/4$ units of share in the sum $\sum_{c \in C} \ms(c)$ in all the cases compared to the average share of $11/4$, i.e., $\sum_{c \in C} \ms(c) < \tfrac{11}{4} |C| - \tfrac{3}{4}$.
\end{itemize}

In the previous detailed case analysis, we have achieved a drop of more than $3/4$ units of share in the sum $\sum_{c \in C} \ms(c)$. In what follows, we show how this implies the improved lower bound. The proof now splits into the following cases depending on the remainder when $n$ is divided by $11$:
\begin{itemize}
\item Suppose first that $n=11q_1+2$ with $q_1 \geq 5$. By the previous considerations, we now have
    \[
    n = \sum_{c \in C} s(c) = \sum_{c \in C} \ms(c) < \frac{11}{4}|C| - \frac{3}{4} \text{.}
    \]
    This implies that
    \[
    |C| > \frac{4}{11} \left( n + \frac{3}{4} \right) = 4q_1 + 1 \text{.}
    \]
    Thus, we have $|C| \geq 4q_1+2 = \lceil 4n/11 \rceil +1$.
\item Suppose then that $n = 11q_2 + 5$ with $q_2 \geq 3$. As in the previous case, we obtain that
    \[
    |C| > \frac{4}{11} \left( n + \frac{3}{4} \right) = 4q_2 + 2 + \frac{1}{11} \text{.}
    \]
    Thus, we have $|C| \geq 4q_2+3 = \lceil 4n/11 \rceil +1$.
\item Finally, suppose then that $n = 11q_3 + 8$ with $q_3 \geq 1$. As in the previous case, we obtain that
    \[
    |C| > \frac{4}{11} \left( n + \frac{3}{4} \right) = 4q_3 + 3 + \frac{2}{11} \text{.}
    \]
    Thus, we have $|C| \geq 4q_3+4 = \lceil 4n/11 \rceil +1$.
\end{itemize}
Thus, in conclusion, we have shown that $\ID(C_n(1,3)) \geq \lceil 4n/11 \rceil +1$ for $n = 11q_1 + 2$ with $q_1 \geq 5$, $n = 11q_2 + 5$ with $q_2 \geq 3$, and $n = 11q_3 + 8$ with $q_3 \geq 8$.
\end{proof}

Recall the general constructions of Theorem~\ref{TheoremIDConstructions}, the constructions for the specific lengths in Table~\ref{TableIDSpecificCodes} and the improved lower bound of Theorem~\ref{TheoremIDLBs}. Combining all these results, we know the exact values of $\ID(C_n(1,3))$ for all the lengths $n$ except for $n = 46$. In the open case $n = 46$, we have $17 = \lceil 4n/11 \rceil \leq \ID(C_{n}(1,3)) \leq \lceil 4n/11 \rceil + 1 = 18$ by the general lower and upper bounds. Using an exhaustive computer search, it can be shown that there does not exist an identifying code in $C_{46}(1,3)$ with 17 codewords, i.e., $\ID(C_{46}(1,3)) = 18$. The method of the exhaustive search is briefly explained in the following remark.
\begin{remark}
Let $C$ be a code in $C_{46}(1,3)$ with $17$ codewords. Without loss of generality, we may assume that $8$ of the codewords belong to $\{0,1, \ldots, 22\}$ and the rest $9$ codewords belong to $\{23, 24, \ldots, 45\}$. Observe that if $C$ is an identifying code in $C_{46}(1,3)$, then the vertices in $\{3,4, \ldots, 19\}$ have a unique identifying set among the codewords in $\{0,1, \ldots, 22\}$ and the vertices in $\{26, 27, \ldots, 42\}$ have a unique identifying set among the codewords in $\{23, 24, \ldots, 45\}$. Using a computer search, we obtain that there exist $1919$ codes $C_1 \subseteq \{0,1, \ldots, 22\}$ with $|C_1| = 8$ such that $I(C_1; u)$, where $u \in \{3,4, \ldots, 19\}$, are all non-empty and unique, and $23137$ codes $C_2 \subseteq \{23, 24, \ldots, 45\}$ with $|C_2| = 9$ such that $I(C_2; u)$, where $u \in \{26, 27, \ldots, 42\}$, are all non-empty and unique. By an exhaustive search, we obtain that no union of such codes $C_1$ and $C_2$ is an identifying code in $C_{46}(1,3)$. Therefore, by the previous observation, there does not exists an identifying code in $C_{46}(1,3)$ with 17 codewords. Hence, we have $\ID(C_{46}(1,3)) = 18$.
\end{remark}

The following theorem summarizes the results of the section and gives the exact values of $\ID(C_n(1,3))$ for all lengths $n \geq 11$. The exact values of $\ID(C_n(1,3))$ for the lengths $n$ smaller than $11$ have been determined in~\cite{GNlidcn}.
\begin{theorem} \label{TheoremIDConclusion}
Let $n$ be an integer such that $n \geq 11$. Now we have the following results:
\begin{itemize}
\item Assume that $n \leq 37$. If $n \equiv 8 \pmod{11}$, then we have $\ID(C_n(1,3)) = \lceil 4n/11 \rceil +1$, and otherwise $\ID(C_n(1,3)) = \lceil 4n/11 \rceil$.
\item Assume that $n \geq 38$. If $n \equiv 2, 5, 8 \pmod{11}$, then we have $\ID(C_n(1,3)) = \lceil 4n/11 \rceil +1$, and otherwise $\ID(C_n(1,3)) = \lceil 4n/11 \rceil$.
\end{itemize}
\end{theorem}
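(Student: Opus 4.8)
The plan is to obtain the theorem by a residue-by-residue synthesis of the three facts already at our disposal: the general bounds of Theorem~\ref{TheoremIDConstructions}, the optimal codes for specific lengths listed in Table~\ref{TableIDSpecificCodes}, and the improved lower bound of Theorem~\ref{TheoremIDLBs}, supplemented by the exhaustive computation for $n = 46$ recorded in the remark above. Write $n = 11q + r$ with $0 \leq r < 11$; the argument organizes itself around the value of $r$.

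First I would dispatch the residues $r \notin \{2,5,8\}$. For each of these, Theorem~\ref{TheoremIDConstructions} already yields $\ID(C_n(1,3)) = \lceil 4n/11 \rceil$ directly, so such lengths supply the ``otherwise'' clause in both the $n \leq 37$ and the $n \geq 38$ parts of the statement with no further work. The entire content therefore concentrates on $r \in \{2,5,8\}$, where Theorem~\ref{TheoremIDConstructions} only confines $\ID(C_n(1,3))$ to the interval between $\lceil 4n/11 \rceil$ and $\lceil 4n/11 \rceil + 1$, and the task is to decide which endpoint is attained.

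For these three residues I would split the lengths into the small exceptional ones and the large ones, matching the thresholds of Theorem~\ref{TheoremIDLBs}. The residue $r = 8$ is uniform: Theorem~\ref{TheoremIDLBs} applies for every $q_3 \geq 1$, that is for all $n \geq 19$, forcing $\ID(C_n(1,3)) = \lceil 4n/11 \rceil + 1$ throughout; this covers $n = 19, 30$ in the range $n \leq 37$ as well as $n = 41, 52, \dots$ in the range $n \geq 38$. For $r = 5$, Theorem~\ref{TheoremIDLBs} requires $q_2 \geq 3$, i.e.\ $n \geq 38$, delivering $+1$ there, whereas the only smaller lengths $n = 16, 27$ (both at most $37$) are handled by the optimal codes of Table~\ref{TableIDSpecificCodes}, which attain $\lceil 4n/11 \rceil$. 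For $r = 2$, Theorem~\ref{TheoremIDLBs} requires $q_1 \geq 5$, i.e.\ $n \geq 57$; the lengths $n = 13, 24, 35$ (all at most $37$) attain $\lceil 4n/11 \rceil$ via Table~\ref{TableIDSpecificCodes}; and the single remaining length $n = 46$ is pinned down by the exhaustive search of the preceding remark, giving $\ID(C_{46}(1,3)) = 18 = \lceil 4n/11 \rceil + 1$. Collecting the outcomes, the lengths still attaining the plain bound $\lceil 4n/11 \rceil$ are exactly $13, 16, 24, 27, 35$, all of which satisfy $n \leq 37$, while every length $\equiv 8 \pmod{11}$ and every length $\equiv 2, 5, 8 \pmod{11}$ with $n \geq 38$ yields $+1$; this is precisely the dichotomy at the threshold $37/38$ stated in the theorem.

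The sole genuine obstacle sits at $r = 2$. Here Theorem~\ref{TheoremIDLBs} only reaches $q_1 \geq 5$ and Table~\ref{TableIDSpecificCodes} only reaches $q_1 \leq 3$, so the length $n = 46$, with $q_1 = 4$, lies in a gap that none of the analytic arguments touch. Closing this gap is exactly what the separate computer search for $\ID(C_{46}(1,3))$ accomplishes; once that lone value is established, the rest of the proof is the routine verification of residues and thresholds sketched above.
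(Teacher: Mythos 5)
Your proposal is correct and follows essentially the same route as the paper, which likewise obtains this theorem by combining Theorem~\ref{TheoremIDConstructions}, the optimal codes of Table~\ref{TableIDSpecificCodes} for $n=13,16,24,27,35$, the improved lower bound of Theorem~\ref{TheoremIDLBs}, and the exhaustive search settling $\ID(C_{46}(1,3))=18$. Your residue-by-residue bookkeeping (including the identification of $n=46$ as the one length not reached by either the table or Theorem~\ref{TheoremIDLBs}) matches the paper's synthesis exactly.
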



\section{Locating-dominating codes in $C_n(1,3)$} \label{SectionLD}

In this section, we consider locating-dominating codes in the circulant graph $C_n(1,3)$. For the rest of the section, assume that $C$ is a locating-dominating code in $C_n(1,3)$.  Recall that we have $\LD(C_n(1,3)) = \lceil n/3 \rceil$ if $n \equiv 0, 1, 4 \pmod{6}$ and $\lceil n/3 \rceil \leq \LD(C_n(1,3)) \leq \lceil n/3 \rceil +1$ if $n \equiv 2, 3, 5 \pmod{6}$ by~\cite{GNlidcn}. Moreover, it is conjectured that $\LD(C_n(1,3)) = \lceil n/3 \rceil +1$ if $n \equiv 2, 3, 5 \pmod{6}$ (see Conjecture~\ref{ConjectureLD}). In what follows, we prove the stated conjecture by increasing the lower bound on $\LD(C_n(1,3))$ for the lengths $n \equiv 2, 3, 5 \pmod{6}$. The basic idea of the proof is similar to the one in the case of identifying codes. In what follows, we show that the average share of a codeword is now at most $17/6$ unless the codeword belongs to a specific pattern of codewords and non-codewords (namely, the pattern~$S3$ which is defined later) when the average share is at most $3$. However, for making the proof more convenient and illustrative, the technical organization of the proof is somewhat different.

In the following proofs, we use several different patterns consisting of codewords and non-codewords. In the illustrations of the patterns, the letter~$x$ denotes a codeword, the letter~$o$ denotes a non-codeword and the symbol~$*$ means that the vertex can be either a codeword or a non-codeword. For example, the pattern~$S1$ is defined as follows: $x*ooox \underline{x} oooo*x$. We say that a pattern is in $C_n(1,3)$ if there exists such a consecutive segment of codewords and non-codewords in the graph. Moreover, if a pattern is in the graph, then we say that a codeword belongs to the pattern if it is the underlined codeword of the pattern. Observe that there exists a symmetrical version of each pattern. For example, the symmetrical version of the pattern~$S1$ is $x*oooo \underline{x} xooo*x$. As in the case of identifying codes, we could use a notation $S1'$ for the symmetrical version of $S1$. However, for simplicity, this notation is omitted and we denote both the original and the symmetric pattern by $S1$.

In the following proposition, we consider the share $s(c)$ of a codeword $c \in C$. In particular, we show that $s(c) \leq 17/6$ in most of the cases and determine the exact situations when $s(c) \geq 3$.
\begin{proposition}\label{shares}
Let $n\geq 14$ and $C$ be a locating-dominating code in $C_n(1,3)$. For all $c\in C$, we have  either $s(c)\le 17/6$ or $s(c)\in \{3,37/12,10/3\}.$
Moreover, the following statements hold:
\begin{itemize}
\item $s(c) = 3$ if and only if $c$ belongs to a pattern~$S1$ or $S3$ (defined below).
\item $s(c) = 37/12$ if and only if $c$ belongs to a pattern~$S4$ (defined below).
\item $s(c) = 10/3$ if and only if $c$ belongs to a pattern~$S6$ (defined below).
\end{itemize}
\end{proposition}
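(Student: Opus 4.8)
The plan is to read off $s(c)$ from the codewords in a bounded window around $c$ and then compress the possibilities using the locating-dominating property. Recall that $N[c]=\{c-3,c-1,c,c+1,c+3\}$, so $s(c)=\sum_{u\in N[c]}1/|I(u)|$, and observe that the only vertices whose codeword status can affect $s(c)$ are those in $\{c-6,c-4,c-3,c-2,c-1,c,c+1,c+2,c+3,c+4,c+6\}$, since $c\pm 5$ lie in no relevant neighbourhood. Put $k=|\{c-3,c-1,c+1,c+3\}\cap C|$, the number of codeword neighbours of $c$, so that $|I(c)|=k+1$. Two elementary facts organise the whole estimate. If a neighbour $w$ of $c$ is itself a codeword, then $\{c,w\}\subseteq I(w)$, so $|I(w)|\ge 2$ and $w$ contributes at most $1/2$ to $s(c)$. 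If $w$ is a non-codeword neighbour, then $c\in I(w)$, and $|I(w)|=1$ forces $I(w)=\{c\}$; since at most one non-codeword can have identifier $\{c\}$, at most one neighbour of $c$ contributes the full value $1$, while every other non-codeword neighbour contributes at most $1/2$.

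These facts settle $k\ge 2$ outright: one computes $s(c)\le 1/3+2\cdot(1/2)+(1+1/2)=17/6$ for $k=2$, $s(c)\le 1/4+3\cdot(1/2)+1=11/4$ for $k=3$, and $s(c)\le 1/5+4\cdot(1/2)=11/5$ for $k=4$, all at most $17/6$. Hence only $k=1$ and $k=0$ remain, where the self-contribution is $1/2$ and $1$ and the crude bounds read $s(c)\le 3$ and $s(c)\le 7/2$. The entire content of the proposition is the sharpening of these two crude bounds together with the identification of the equality patterns, and the driving mechanism is the simple rule that two distinct non-codewords may not share an identifier.

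For $k=1$ I would fix, up to the reflection of $C_n(1,3)$ fixing $c$, whether the unique codeword neighbour lies at distance $1$ or $3$, and then track which of the three non-codeword neighbours can be the singleton $\{c\}$. The recurring step is that forcing two non-codeword neighbours to the same two-element identifier (for instance $I(c-1)=I(c-3)=\{c-4,c\}$) contradicts location-domination, so one of these sets must be enlarged by a codeword further out, lowering its reciprocal. This shows $s(c)\le 17/6$ in every $k=1$ configuration except the one in which the codeword neighbour sits at distance $1$, the opposite distance-$1$ vertex is the non-codeword with identifier $\{c\}$, and both vertices $c\pm 3$ have size-$2$ identifiers; that configuration is exactly the pattern $S1$ and yields $s(c)=3$. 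As $37/12$ and $10/3$ both exceed $3$, they cannot occur when $k=1$.

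The case $k=0$ carries the real difficulty. Now $I(c)=\{c\}$ and the four non-codewords $c\pm 1,c\pm 3$ all contain $c$ in their identifiers, so I would organise the argument by which (at most one) of them equals $\{c\}$ and by the sizes of the remaining three. The same clash principle forces codewords onto the sites $c\pm 4$ and $c\pm 6$ and thereby inflates the relevant identifiers; concretely, the size-multiset $\{1,2,2,2\}$, which would give $s(c)=7/2$, is impossible because it forces two non-codewords to share $\{c-4,c\}$, and repairing the clash raises one set to size $3$, producing exactly the pattern $S6$ with $s(c)=10/3$. The same principle is what excludes intermediate multisets such as $\{1,2,3,3\}$ (value $19/6$) and hence opens the gaps $(17/6,3)$, $(3,37/12)$ and $(37/12,10/3)$ in the attainable shares. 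After this bookkeeping, exactly three configurations survive above $17/6$: four identifiers of size $2$, giving $1+4\cdot(1/2)=3$ (pattern $S3$); sizes $1,2,3,4$, giving $1+1+1/2+1/3+1/4=37/12$ (pattern $S4$); and sizes $1,2,2,3$, giving $1+1+1/2+1/2+1/3=10/3$ (pattern $S6$). The hardest part will be to confirm that the enumeration of admissible size-multisets is exhaustive and that each admissible multiset is realised by a single local pattern up to the free symbols at $c\pm 5$, so that the stated equivalences hold as genuine ``if and only if'' statements rather than as mere inequalities.
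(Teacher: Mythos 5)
Your overall architecture matches the paper's: casework on $|I(c)|$, the observation that at most one non-codeword neighbour of $c$ can have identifier $\{c\}$, and location-domination clashes forcing extra codewords further out. The bounds for $|I(c)|\ge 3$ and the crude bounds $3$ and $7/2$ for $k=1$ and $k=0$ are all correct. However, your assignment of the equality patterns contains a concrete error. Both $S1$ and $S3$ are $k=1$ patterns: in $S3=xxoooox\underline{x}ooooxx$ the vertex $c-1$ is a codeword, so $|I(c)|=2$. The $k=1$ configuration you isolate (codeword neighbour at distance $1$, say $c-1$; $I(c+1)=\{c\}$; $|I(c\pm3)|=2$) does give $s(c)=3$, but it is not ``exactly the pattern $S1$'': the forced conditions $c+6\in C$, $c-6\in C$, $c-7\in C$, together with the fact that $c-5$ and $c+5$ cannot both be non-codewords (else $I(c-2)=I(c+2)$), split it into the two genuinely different completions $S1$ and $S3$. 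That distinction is load-bearing later: in Proposition~\ref{less3} the excess share of an $S1$-codeword is shifted to the codeword $c-5$, which does not exist in $S3$, so collapsing the two patterns into one would break the subsequent argument.

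Second, the $k=0$ configuration you propose as ``$S3$'' --- all four neighbour identifiers of size $2$, giving $1+4\cdot\tfrac{1}{2}=3$ --- cannot occur in a locating-dominating code. If $c\pm1,c\pm3\notin C$ and, say, $c-2\in C$, then $c-2$ lies in the identifiers of the three distinct non-codewords $c-3$, $c-1$, $c+1$, which therefore cannot all have size exactly $2$ with distinct identifiers; hence $c\pm2\notin C$, which forces $I(c-1)=\{c-4,c\}$ and $I(c+1)=\{c,c+4\}$ with $c\pm4\in C$, and then $c-4\in I(c-3)$ forces either $|I(c-3)|\ge3$ or $I(c-3)=I(c-1)$, a clash between two non-codewords. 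So in the $k=0$ case only the multisets $\{1,2,3,4\}$ (pattern $S4$) and $\{1,2,2,3\}$ (pattern $S6$) survive, exactly as in the paper. Finally, you explicitly defer the exhaustive check that no other multiset above $17/6$ (e.g.\ $\{1,2,3,3\}$ with value $19/6$) is attainable; that enumeration is the substance of the proof and cannot be omitted, since the proposition is an equality characterization and the lower-bound arguments that follow rely precisely on the gap between $17/6$ and the listed exceptional values.
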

\begin{proof}
  Let $c$ be a codeword in $C$. The proof now divides into three parts depending on whether $|I(c)| \geq 3$, $|I(c)| \geq 2$ or $|I(c)| = 1$.
  \begin{itemize}
  \item Suppose first that $|I(c)|\ge 3$. Observe that there exists at most one vertex $u$ in $N[c]$ such that $|I(u)| = 1$, and the other vertices are covered by at least two codewords. Hence, we immediately obtain that $s(c) \leq 1 + 3 \cdot 1/2 + 1/3 = 17/6$.
  \item Assume then that $|I(c)|=2$. If all the vertices  $v\in N[c]$ have $|I(v)|\ge 2$, then we get $s(c)\le 5/2<17/6.$ Therefore, it is enough to consider the case where there is at least one vertex $v\in N[c]$ with $|I(v)|=1.$ There cannot be more than one such vertex. Indeed, such a vertex must be a non-codeword, and if there were two, say $u$ and $w$, then $I(u)=I(w)$, which is not possible. Moreover, if there is one vertex $v\in N[c]$ such that $|I(v)|\ge 3$, we have $s(c)\le 17/6.$ Therefore, $s(u)=3$ if and only if all the vertices in $N[c]$ have the size of the $I$-sets equal to 2 except one equal to 1. Next we analyze this case more carefully.
    \begin{itemize}
    \item Let first $c-3\in I(c)$ (the case $c+3$ goes analogously). If $I(c-1)=\{c\}$ (resp. $I(c+1)=\{c\}$), then $c+4$ (resp. $c-4$) belongs to $C$ implying $|I(c+3)|\ge 3$ (resp. $|I(c-3)|\ge 3$). If $I(c+3)=\{c\}$, then $|I(c-3)|\ge 3$ (since $I(c-1) \neq I(c+1) = \{c\}$). In all cases, the share is at most $17/6$.
    \item Assume then that $c-1\in I(c)$ (the case $c+1$ is analogous). If $I(c-3)=\{c\}$, then  $|I(c+3)|\ge 3$. If $I(c+3)=\{c\}$, then $|I(c-1)|\ge 3.$ In these cases $s(c)\le 17/6.$ Therefore, we can assume that $I(c+1)=\{c\}$ and $|I(c-3)|=|I(c+3)|=2$. Due to $c+3$, we must have $c+6\in C.$ Now either $c-4$ or $c-6$ belong to $C$ (if both we are done). Moreover, we may assume that $c-4 \notin C$ as otherwise $|I(c-1)| \geq 3$ implying $s(c)\le 17/6$. Therefore, it is enough to consider the case $c-6\in C.$ Consequently, we have the pattern:
      $$ x*ooox\underline{x}oooo*x \text{,}$$
      where $c$ is denoted by the underlined codeword $\underline{x}$. Both of the unknowns cannot be non-codewords because then $I(c-2)=I(c+2)$ and $c-2,c+2\notin C.$ Moreover, we have $c-7\in C$ since otherwise $I(c-2)=I(c-4).$ This leads to the following two patterns when $s(c)=3$:
           $$
      \begin{array}{lc}
        S1 & xxxooox\underline{x}oooo*x\\
        S3 & xxoooox\underline{x}ooooxx
        \end{array}.
      $$
      \end{itemize}
  \item Let then $|I(c)|=1$. If there is no vertex $v\in N(c)$ such that $|I(v)|=1$, then it is easy to check that $s(c)\le 17/6$ as at least one $I$-set has at least three codewords. Consequently, let us assume that such $v$ exists (clearly only one such vertex is possible). Without loss of generality, we may assume that $v$ is either $c-1$ or $c-3$.

      \begin{itemize}

      \item Let us assume first that $v=c-1$. Due to $c+1$, we must have $c+4\in C.$ Moreover, since $I(c+1)\neq I(c+3)$, we get $c+6\in C.$ As $I(c+2)\neq \emptyset$ (resp. $c-2$), we have $c+5\in C$ (reps. $c-5\in C$). In order to have $I(c-1)\neq I(c-3)$ we must have $c-6\in C.$ In addition, $c-7\in C$, since $I(c-2)\neq I(c-4).$ This leads to $s(c)=10/3$    and the only way to achieve this is by the pattern:
    $$
    \begin{array}{lc}
      S6 & xxxoooo\underline{x}oooxxx
    \end{array}.
    $$

    \item Suppose then that $v=c-3.$ In order to have $I(c-3)\neq I(c-1)$, we must have $c+2\in C.$ Also $c-5\in C$ because $I(c-2)$ cannot be the empty set. Moreover, $c+4\in C$ due to $I(c-1)\neq I(c+1).$ We also have $c+6\in C$ to get $I(c+1)\neq I(c+3)$. Now $s(c)=37/12$ and it comes from the pattern:
    $$
    \begin{array}{lc}
      S4 & oxoooo\underline{x}oxox*x\\
    \end{array}.
    $$
    \end{itemize}
  \end{itemize}
\end{proof}

Next we show that shifting the shares among codewords gives us the situation where the share of each vertex is (after the shifting)  less than $17/6$ or equal to 3. Moreover, the share is equal to 3 if and only if we have the case of pattern $S3.$ The share of a vertex $v \in C$ after shifting is denoted by $\ms(v)$. 
We do the shifting using the following three shifting rules and their symmetric counterparts (where the pattern is read from right to left):
\begin{figure}[htp]
\centering
\includegraphics[width=350pt]{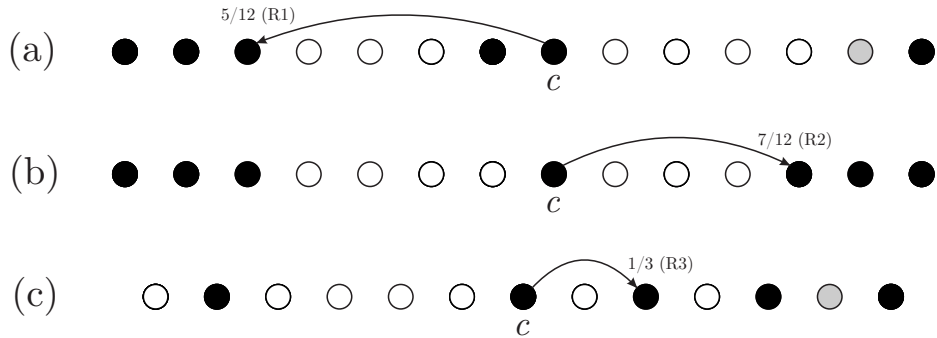}
\caption{The black nodes are codewords and white are non-codewords. The gray node can be a codeword or a non-codeword. The edges of the circulant graph are omitted in the figure.} \label{SjaL}
\end{figure}

\begin{itemize}
\item R1: The vertex $c$ gives $5/12$ units of share to $c-5$.
\item R2: The vertex $c$ gives $7/12$ units of share to $c+4$.
\item R3: The vertex $c$ gives $1/3$ units of share to $c+2$.
\end{itemize}

Notice that if a vertex receives shares by any of the rules, it cannot receive more share by another rule (including its symmetric counterpart).

\begin{proposition}\label{less3}
  Let $C$ be a locating-dominating code in $C_n(1,3)$ where $n\geq 14$. Then we have $\ms(c)\le 17/6$ for all $c\in C$ unless $c$ belongs to a pattern $S3$ when $\ms(c)=3.$
  \end{proposition}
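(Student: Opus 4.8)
The plan is to feed the classification of Proposition~\ref{shares} into a case analysis of the three shifting rules and their symmetric versions. By Proposition~\ref{shares}, the only codewords whose unshifted share exceeds $17/6$ are those belonging to the patterns $S1$, $S3$, $S4$ and $S6$, with $s(c)=3$, $3$, $37/12$ and $10/3$ respectively. Hence it suffices to establish three things: (i) every codeword in $S1$, $S4$ or $S6$ sheds, via the rules, a net amount at least $s(c)-17/6$, i.e. $1/6$, $1/4$ and $1/2$ respectively, while receiving nothing to offset this; (ii) a codeword in $S3$ neither gives nor receives, so that $\ms(c)=s(c)=3$; and (iii) every remaining codeword, whose unshifted share is at most $17/6$, does not receive enough to be pushed above $17/6$.

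For part~(i) I would read off from each heavy pattern exactly which rules fire. In $S4$ and $S6$ the vertex $c-5$ is a codeword, so rule~R1 moves $5/12$ to $c-5$; since $5/12\ge 1/4$ this already settles $S4$, giving $\ms(c)\le 37/12-5/12=8/3<17/6$. For $S6$ a single transfer of $5/12$ is not enough (we need $1/2$), but a second codeword at distance $4$ or $5$ (namely $c+4$ or $c+5$) triggers a further transfer via rule~R2 or symmetric~R1, so that $\ms(c)\le 10/3-5/12-7/12=7/3<17/6$ (or $10/3-5/12-5/12=5/2$). For $S1$ the only codeword among $c\pm5,c\pm4,c\pm2$ is $c-5$, so rule~R1 fires and removes $5/12>1/6$. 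In each of these patterns one must additionally inspect the positions $c\pm5,c\pm4,c\pm2$ and the configurations of the possible givers there to confirm that no rule deposits share back into $c$; this check is where part~(i) requires genuine care.

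For part~(ii), in the pattern $S3$ the vertices $c\pm5$, $c\pm4$ and $c\pm2$ are all non-codewords, so $c$ cannot be the giver of any rule. A symmetric inspection of the neighbourhoods of the potential givers (again at distances $5$, $4$, $2$) shows that none of them fires a rule aimed at $c$, so $c$ receives nothing either. Consequently $\ms(c)=3$, which is exactly the stated exception.

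The main work, and the expected obstacle, is part~(iii). Using the stated observation that a vertex receives share from at most one rule, the total received by any codeword is at most $\max\{5/12,7/12,1/3\}=7/12$. It therefore suffices to prove, for each rule, an upper bound on the \emph{unshifted} share of its recipient $d$: namely $s(d)\le 9/4$ when $d$ receives $7/12$ via rule~R2, $s(d)\le 29/12$ when $d$ receives $5/12$ via rule~R1, and $s(d)\le 5/2$ when $d$ receives $1/3$ via rule~R3, since $17/6-7/12=9/4$, $17/6-5/12=29/12$ and $17/6-1/3=5/2$. Each such bound has to be extracted from the local configuration depicted in Figure~\ref{SjaL} that triggers the rule: the triggering pattern must force enough multiply-covered vertices in $N[d]$ to guarantee the stated bound on $s(d)$. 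The delicate step is the R2 recipient, where $7/12$ is transferred and one must show $s(d)\le 9/4$; one must also confirm the scheme is internally consistent, so that no codeword is simultaneously assigned conflicting giving and receiving roles and no heavy-pattern vertex quietly receives offsetting share. Once these recipient bounds and the consistency checks are in place, the heavy-pattern and $S3$ cases from parts~(i) and~(ii) combine to give $\ms(c)\le 17/6$ for every codeword except those in $S3$, completing the proof.
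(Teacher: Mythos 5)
Your overall strategy---feed the classification of Proposition~\ref{shares} into the shifting scheme, let each heavy pattern shed share through a rule, and bound the unshifted share of each recipient by $17/6$ minus the amount received---is exactly the paper's. The genuine gap is in how you pair rules with patterns. The paper's scheme is a strict bijection: R1 fires only in the configuration $S1$ (recipient $c-5$), R2 only in $S6$ (recipient $c+4$), and R3 only in $S4$ (recipient $c+2$); the triggering configurations of Figure~\ref{SjaL} are these patterns, so in particular R1 requires $c-1\in C$ and cannot fire for a codeword of $S4$ or $S6$, where $c-1\notin C$. You instead send $5/12$ from an $S4$- or $S6$-codeword to $c-5$ via R1, and for $S6$ you let both R1 and R2 fire. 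This is not just different bookkeeping: it breaks the recipient bounds. In $S6$ the non-codeword $c-2$ is dominated only by $c-5$, so $I(c-2)=\{c-5\}$ contributes a full unit to $s(c-5)$, and one can only guarantee $s(c-5)\le 1/2+1/3+1/2+1/2+1=17/6$; adding $5/12$ would push $\ms(c-5)$ above $17/6$ and destroy the invariant (a similar problem occurs for $c-5$ in $S4$). The paper's choice of recipients is the whole point of the rule design: each heavy pattern forces its designated recipient to have unshifted share at most $13/6$ (for R1 and R2) or $29/12$ (for R3); e.g.\ in $S4$ one checks $s(c+2)\le 1/2+1/3+1+1/4+1/3=29/12$, leaving exactly enough room for the $1/3$ transferred.

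Two smaller points. In your part~(ii) you assert that in $S3$ the vertices $c\pm5$, $c\pm4$, $c\pm2$ are all non-codewords; in fact $c+5$ and $c+6$ are codewords of $S3$ (the pattern is $xxoooox\underline{x}ooooxx$). The conclusion that no rule fires at an $S3$-codeword is still correct, but for a different reason: neither $c$ nor any would-be giver matches a triggering configuration (for instance, $c$ being the recipient $c'+5$ of a mirrored R1 would force $c+4\in C$). Finally, your part~(iii) thresholds $29/12$, $9/4$ and $5/2$ are the right targets, but because each rule fires only inside its own pattern, the recipient bounds are read off directly from $S1$, $S6$ and $S4$; this is what reduces the paper's proof to a short verification rather than the extended giver-by-giver analysis you anticipate.
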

\begin{proof}
  If in the code $C$ there are only codewords with share at most $17/6$, then there is nothing to do. Let us now consider the other cases:
  \begin{itemize}
  \item Let there be a codeword $c$ with $s(c)=3$ in the pattern $S1$.  By the rule R1, we shift 5/12 units of share to $c-5$. Notice that $s(c-5)\le 13/6.$ Consequently, we have $\ms(c)=31/12<17/6$ and $\ms(c-5)\le 31/12.$
  \item If there is a codeword $c$ with share $s(c)=37/12$ in the pattern $S4$. Using the rule  R3 we shift $1/3$ units of share to $c+2$. The share $s(c+2)\le 29/12.$ Therefore, $\ms(c)=11/4<17/6$ and $\ms(c+2)\le 11/4.$
  \item Let there be a codeword $c$ with share $s(c)=10/3$ in the pattern $S6$.
    Now R2 shifts $7/12$ units of a share to $c+4$ with $s(c+4)\le 13/6$. Consequently, $\ms(c)=11/4$ and $\ms(c+4)\le 11/4$.
  \end{itemize}
  \end{proof}

Before our main theorem on locating-dominating codes, let us give the following technical lemma.
\begin{lemma} \label{nonS3}
Let $n>17$ be an integer such that $n\equiv 3\pmod{6}$ or $n\equiv 2\pmod{3}$, and let $C$ be a locating-dominating code in $C_n(1,3)$. If there is no pattern $S3$, then $|C|>\lceil n/3\rceil$.
\end{lemma}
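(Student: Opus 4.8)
The plan is to use the share-counting machinery established in Propositions~\ref{shares} and~\ref{less3}. By those results, after the shifting scheme is applied every codeword satisfies $\ms(c) \leq 17/6$, except codewords belonging to a pattern~$S3$, which have $\ms(c) = 3$. Since the hypothesis of the lemma is precisely that $C$ contains \emph{no} pattern~$S3$, we immediately obtain the uniform bound $\ms(c) \leq 17/6$ for every $c \in C$. Summing over all codewords and using $\sum_{c \in C} s(c) = \sum_{c \in C} \ms(c) = n$ (the shifting scheme only redistributes share, preserving the total, which equals $|V| = n$ because $C$ is dominating), I would conclude
\[
n = \sum_{c \in C} \ms(c) \leq \frac{17}{6}|C| \text{,}
\]
which rearranges to $|C| \geq \tfrac{6}{17}n$.

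The heart of the argument is then purely arithmetic: I must show that this bound $\tfrac{6}{17}n$ already exceeds $\lceil n/3 \rceil$ strictly, so that $|C| > \lceil n/3 \rceil$, under the congruence and size hypotheses on $n$. First I would write $n = 3m + t$ to control $\lceil n/3 \rceil$ according to the residue of $n$ modulo $3$; the congruence conditions $n \equiv 3 \pmod 6$ or $n \equiv 2 \pmod 3$ are exactly the cases where $\lceil n/3 \rceil$ is \emph{not} of the form that lets a code of size $\lceil n/3 \rceil$ slip under $\tfrac{6}{17}n$. Concretely, comparing $\tfrac{6}{17}n$ against $\lceil n/3 \rceil$ reduces to checking $18n$ versus $17\cdot 3 \lceil n/3\rceil = 51\lceil n/3\rceil$, i.e.\ whether $6n > 17\lceil n/3 \rceil$. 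I would handle each relevant residue class of $n$ (the cases coming from $n\equiv 3\pmod 6$ and from $n\equiv 2\pmod 3$) separately, substituting the appropriate closed form for $\lceil n/3 \rceil$, and verify the strict inequality holds once $n$ is large enough.

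The main obstacle is making the inequality strict for \emph{every} qualifying $n > 17$, not merely asymptotically. Since $\tfrac{6}{17} \approx 0.353 > \tfrac13$, the bound $\tfrac{6}{17}n$ grows faster than $n/3$, but the ceiling $\lceil n/3\rceil$ can add up to almost a full unit, so for small $n$ the gap $\tfrac{6}{17}n - \lceil n/3\rceil$ may be below $1$ and the improvement by one could fail. The hypothesis $n > 17$ (and the exclusion of residues $0,1,4 \pmod 6$, where optimal codes of size exactly $\lceil n/3\rceil$ genuinely exist) is what guarantees the gap is large enough. I expect the cleanest route is to verify that $\tfrac{6}{17}n > \lceil n/3 \rceil$ for each admissible residue class by a direct finite check at the threshold value combined with the monotone growth of $\tfrac{6}{17}n - n/3 = \tfrac{1}{51}n$: once the surplus $\tfrac{1}{51}n$ exceeds the maximal possible ceiling-rounding contribution for the relevant residues, the strict inequality $|C| > \lceil n/3\rceil$ follows, completing the proof.
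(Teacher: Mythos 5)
Your proposal is correct and follows essentially the same route as the paper: invoke Proposition~\ref{less3} to get the uniform bound $\ms(c)\le 17/6$ in the absence of $S3$, sum the shares to obtain $|C|\ge \tfrac{6}{17}n$, and then verify $\tfrac{6}{17}n>\lceil n/3\rceil$ separately for $n=6k+3$ (where the inequality holds for all $k$) and $n=3k+2$ (where $n>17$ forces $k>5$ and hence the strict inequality). The paper's proof is exactly this two-case arithmetic check, so no further comment is needed.
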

\begin{proof}
  By Proposition \ref{less3}, after shifting the shares and  knowing there is no $S3$ we have $\ms(c)\le 17/6$ for all $c\in C$. Hence,
  $$n=\sum_{i\in C}s(i)=\sum_{i\in C} \ms(i)\le \frac{17}{6}|C|.$$
  The proof divides now into the following cases:
  \begin{itemize}
  \item if $n=6k+3$, we have
    $ |C| \geq  2k+1 +\frac{2k+1}{17} >2k+1$
  \item and if $n=3k+2$, we have
    $ |C| \geq k+\frac{k+12}{17}$
    and as $n>17$, $k>5$, which gives $|C|>k+1$.
  \end{itemize}

\end{proof}

\begin{theorem}\label{LD-bound}
  Let $n> 17$. Then
  $$ \LD(C_n(1,3)) =
  \left\{
  \begin{array}{ll}
    \displaystyle \left\lceil\frac{n}{3}\right\rceil &
    \text{if } n\equiv 0,1,4 \pmod{6} \\
    &\\
    \displaystyle \left\lceil\frac{n}{3}\right\rceil +1 &
    \text{otherwise}
  \end{array}
  \right.
  $$
\end{theorem}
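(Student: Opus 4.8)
The plan is to establish the theorem by combining the upper bounds from~\cite{GNlidcn} with a new matching lower bound for the cases $n \equiv 2, 3, 5 \pmod 6$. Since the upper bounds $\LD(C_n(1,3)) \leq \lceil n/3 \rceil$ for $n \equiv 0,1,4 \pmod 6$ and $\LD(C_n(1,3)) \leq \lceil n/3 \rceil + 1$ for $n \equiv 2,3,5 \pmod 6$ are already known, and the lower bound $\LD(C_n(1,3)) \geq \lceil n/3 \rceil$ holds for all $n$, the case $n \equiv 0,1,4 \pmod 6$ is settled immediately. The entire content of the proof therefore reduces to showing $\LD(C_n(1,3)) \geq \lceil n/3 \rceil + 1$ when $n \equiv 2,3,5 \pmod 6$, which (one checks on the residues) is exactly the condition $n \equiv 3 \pmod 6$ or $n \equiv 2 \pmod 3$ appearing in Lemma~\ref{nonS3}.

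First I would let $C$ be a locating-dominating code in $C_n(1,3)$ with $n > 17$ in one of the relevant residue classes. By Lemma~\ref{nonS3}, if $C$ contains no pattern $S3$, then $|C| > \lceil n/3 \rceil$ already, so we are done. Hence I would assume that at least one pattern $S3$ occurs. The strategy is then parallel to the identifying-code argument of Theorem~\ref{TheoremIDLBs}: by Proposition~\ref{less3} the only codewords with averaged share reaching the extremal value $\ms(c)=3$ are those belonging to a pattern $S3$, while every other codeword satisfies $\ms(c)\le 17/6 = 3 - 1/6$. The pattern $S3$ is $xxoooox\underline{x}ooooxx$, and I would note that it is self-propagating only in a way that forces a periodic structure: the codewords flanking an $S3$ block sit in positions dictated by the $I$-set constraints, and stringing consecutive $S3$ patterns together produces segments of length divisible by $6$ (each $S3$ contributing a block of $6$ vertices with $2$ codewords, matching density $1/3$). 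Since $n \not\equiv 0 \pmod 6$ in our cases, the code cannot be tiled entirely by such segments, so a codeword not belonging to any $S3$ must eventually appear.

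The heart of the argument is to show that wherever the $S3$-tiling breaks, the accumulated averaged share drops strictly below the value $3$ times the number of codewords involved, by a definite amount exceeding $5/6$ (the analogue of the $3/4$ deficit in the identifying case). Concretely, I would show that the interface between an $S3$-region and a non-$S3$ codeword forces enough codewords of share at most $17/6$ to guarantee $\sum_{c \in C} \ms(c) < 3|C| - 5/6$, or more simply $\sum_{c\in C}\ms(c) \le 3|C| - \varepsilon$ for an $\varepsilon$ large enough to push the ceiling up by one. This requires a case analysis on the local configuration immediately past the end of an $S3$ run, analogous to cases (A1)--(A2) and (B1)--(B5) of Theorem~\ref{TheoremIDLBs}, but considerably shorter because $S3$ has a more rigid structure than the patterns $P, P'$. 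From the resulting strict inequality one computes, in each residue class $n = 6k+3$, $n = 6k+2$, $n = 6k+5$ (equivalently the classes of Lemma~\ref{nonS3}), that $|C| > \lceil n/3 \rceil$, hence $|C| \geq \lceil n/3\rceil + 1$.

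I expect the main obstacle to be exactly this boundary case analysis: verifying that every way an $S3$-repetition can terminate yields a strictly larger-than-$17/6$-averaged deficit, distributed over the right number of codewords. One must be careful that a codeword receiving shifted share under rules R1--R3 is not double-counted, and that the extremal codewords at the boundary genuinely fail to belong to an $S3$ so that Proposition~\ref{less3} gives them share at most $17/6$ rather than $3$. The arithmetic at the end is routine: with $\sum_{c\in C}\ms(c) < 3|C|$ sharpened to a strict deficit, the bound $n = \sum_{c\in C} s(c) = \sum_{c \in C}\ms(c)$ yields $|C| > n/3$ with enough slack that the ceiling increases by one in precisely the three residue classes claimed, completing the proof.
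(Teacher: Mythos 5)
Your overall strategy is the same as the paper's: dispose of the $S3$-free case with Lemma~\ref{nonS3}, observe that consecutive $S3$ patterns tile the cycle in blocks of six vertices with two codewords each (the paper formalizes this via the sub-pattern $P6 = xxoooo$), use $n \not\equiv 0 \pmod 6$ to force the tiling to break, and extract a share deficit at the break. However, the concrete deficit you commit to is wrong. You propose $\sum_{c\in C}\ms(c) < 3|C| - 5/6$ by analogy with the $3/4$ of the identifying case, but the arithmetic here demands strictly more. For $n = 6k+2$ one has $\lceil n/3\rceil = 2k+1$ and the target is $|C|\ge 2k+2$; from $n \le 3|C| - \delta$ you get $|C| \ge 2k + (2+\delta)/3$, which only exceeds $2k+1$ when $\delta > 1$. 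The same computation for $n=6k+5$ again requires $\delta>1$, while for $n=6k+3$ any $\delta>0$ suffices. So a deficit of $5/6$ proves nothing new in two of the three residue classes, and your hedge (``$\varepsilon$ large enough'') conceals the fact that you have not determined what ``large enough'' is. The paper splits the argument accordingly: for $n\equiv 3\pmod 6$ it suffices to exhibit a single codeword at the boundary with $\ms < 3$, whereas for $n\equiv 2,5\pmod 6$ every terminal configuration must be shown to yield a drop strictly exceeding $1$ (the paper obtains $-13/12$, $-7/6$, $-5/4$, etc.).

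The second issue is that the boundary case analysis, which you describe as ``considerably shorter'' than the identifying-code one, is in fact the bulk of the proof and is not shorter: the paper works through the configurations of $\{n-5,\dots,n-1\}$ adjacent to the last $P6$ block, reaching seven main cases and a further ten sub-cases (Cases 4.1--4.10) in the hardest branch, with careful bookkeeping of which codewords can still receive share under rules R1--R3. Your proposal correctly identifies the shape of this analysis and the pitfalls (no double-counting of shifted share, boundary codewords must genuinely fail to lie in an $S3$), but none of it is carried out, and with the deficit target set at $5/6$ rather than $>1$ the analysis as planned would close the argument only for $n\equiv 3\pmod 6$.
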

\begin{proof}
  Let $n>17$. For $n\equiv 0,1,4 \pmod{6}$ the result for locating-dominating codes is given in \cite{GNlidcn}. We  need to prove that  for $n\equiv 2,3,5\pmod{6}$ the bound $\lceil n/3\rceil$ is not attainable. On the other hand, in \cite{GNlidcn} there are constructions of cardinality $\lceil\frac{n}{3}\rceil +1$ given in these cases. We will write $n=6k+r$ with $r\in\{2,3,5\}$. Notice that for $r=2,5$, we can write $n$ in the form $3l+2$, which implies $n\equiv 2\pmod{6}$ for $l\equiv 0\pmod{2}$ and $n\equiv 5\pmod{6}$ otherwise.
  By Lemma \ref{nonS3} we know that if there is no $S3$ patterns, then the bound is not attainable.
  Assume then than there is a pattern $S3$. Note that these patterns $S3$ can overlap each other. We denote by $P6$ the pattern $xxoooo$. Therefore, we can divide overlapping $S3$-patterns into non-overlapping patterns $P6$.

  Without loss of generality, we can assume that the vertices $0,1,2,3,4,5,6,7,8,9,10,11$ form two patterns $P6$ and $s(7)=3$, that is, $0,1,6,7,12,13\in C$. As $n\not\equiv 0 \pmod{6}$, we cannot  have only patterns $P6$ in the graph. Therefore, we can assume that  there are $t$ consecutive  patterns $P6$ starting in $0$ (to the right) and that there is no pattern $P6$ on the vertices $n-6,n-5,n-4,n-3,n-2,n-1$.
  We want to prove that the sum of all the shares of codewords is strictly less than $3|C|$ for $n=6k+3$ and strictly less than $3|C|-1$ for $n=6k+2$ and $n=6k+5$. This will imply the lower bounds as we shall see.

  \begin{itemize}
  \item Let $n=6k+3$. Since there is no pattern $P6$ on the vertices $n-6,n-5,n-4,n-3,n-2,n-1$, the vertex 1 does not have the surroundings of the pattern $S3$. Therefore, $\ms(1)<3$ by Proposition~\ref{less3}.
  Hence, we have
    $$
    \begin{array}{ccc}
      \displaystyle n=\sum_{i\in C} s(i) =\sum_{i\in C} \ms(i)< 3|C|
      & \Leftrightarrow
      &  6k+3 < 3|C| \\
      & \Leftrightarrow
      & 2k+1<|C|.
    \end{array}
    $$
    Consequently, $|C|\geq 2k+2=\lceil n/3\rceil+1$.

  \item Now, let $n\equiv 2,5\pmod{6}$. It is easy to check that $s(1)\le 3$ and $s(0)\le 3.$ We will try to find such vertices, say $b$ of them, that their shares (after the shifting by the above rules) is less than $3b-1-\varepsilon$ for some $\varepsilon>0.$
    \begin{itemize}
    \item Let first $s(1)=3$. This implies that $n-5\in C$ and $n-3,n-2,n-1$ are not codewords (due to patterns~$S1$ and $S3$). If $s(0)=3$, then we get a pattern $P6$ on the vertices $n-6,n-5,n-4,n-3,n-2,n-1$, hence, as we assumed there was no such pattern, $s(0)<3$. Consequently, $n-4\in C$. In addition, $n-6\in C$ due to $I(n-1)\neq I(n-3)$. This implies that $s(0)\le 7/3.$
      The share of $n-4$ is then at most $13/6$, hence we have that the share of 0 and $n-4$ (before the shifting by the rules) drops from $2\cdot 3$ by at least $-2/3-5/6=-3/2.$ Only the rule R1 applies here and it can give to $n-4$ the amount of $5/12$. Therefore, the total drop in shares is at least $-3/2+5/12=-13/12$ (which is enough as we try to have drop of $-1-\varepsilon$). Hence,
      $$\displaystyle \sum_{i\in C} s(i)=\sum_{i\in C} \ms(i) \leq 3|C| -13/12.$$

    \item Suppose then that $s(1)<3$. In what follows, we study vertices $n-1$, $n-2$, etc., and different variants of possible codewords among them in order to find the codewords whose shares drop enough (of course, excluding the cases $s(1)=3)$.

        We start by consider separately the cases $n-1\in C$ and $n-1\notin C$.

      (i) Suppose first that $n-1\in C$. We divide further the study into two cases $n-2\in C$ and $n-2\notin C$. Let $n-2\in C$. Now $s(1)\le 2$ and $s(0)\le 2$. Therefore, the drop of the vertices 0 and 1 is $-2$ compared to $2\cdot 3$ and since no rules gives these vertices any additional share, we are done. Assume then that $n-2\notin C$. Now $s(1)\le 13/6$ and $s(0)\le 17/6.$
      Thus $s(0)+s(1)\leq 5$. If $s(0)+s(1)<5$, then the drop is $-1-\varepsilon$ and no rules give extra share to them, so we are done. If $s(0)+s(1)=5$, then we have $n-5\in C$ and $n-4,n-3,n-2\notin C$ and $s(n-1)\leq 13/6$. The drop among the vertices 0,1 and $n-1$ is altogether $-11/6$. The rules R1 and R2 can give at most $7/12$ to the vertex $n-1$ (not both at the same time), and the codewords $0$ and $1$ do not receive share according to any rule. Therefore, the total drop is at least $-11/6+7/12=-5/4.$ Hence we have
      $$\displaystyle \sum_{i\in C} \ms(i)\leq 3|C|-\frac{5}{4}.$$

      (ii) Let then $n-1\notin C$. Notice that in this case no rules give any additional share to vertices  0 and 1. In the following, we consider the cases depending on which of the vertices in $\{n-5,n-4,n-3,n-2\}$ are codewords. If there are three (or four) codewords in that set, it is easy to compute that $s(0)+s(1)\le 29/6$. Hence the drop of the vertices 0 and 1 is at least $-7/6$. Recall that the rules give no extra here. Consequently,
      $$\displaystyle \sum_{i\in C} \ms(i)\leq 3|C| -7/6.$$
      The remaining cases are listed below. Notice that since $s(1)<3$, the case where $n-5,n-4\in C$ and $n-3,n-2\notin C$ and also the case where $n-5\in C$ and $n-4,n-3,n-2\notin C$ can be excluded. Furthermore, the case $n-4\in C$ and $n-5,n-3,n-2\notin C$ is excluded because in that case $I(n-2)=I(2)=\{1\}$ and $2,n-2\notin C$ which is impossible since $C$ is locating-dominating. In the table below, we have all the other cases:
      $$
      \begin{array}{|l|c|c|c|c|c|}
        \hline
        Case & pattern & s(1) & s(0)\leq & s(p) \leq & drop \\
        \hline \hline
        \text{Case 1}
        & ooxxo\underset{0}{x}xoooox\underset{7}{x} & 5/2 & 2 & &-3/2\\
        \hline
        \text{Case 2}
        & xo\underset{p}{x}oo\underset{0}{x}xoooox\underset{7}{x} & 8/3 & 17/6 & {\color{gray} 2}  & -1/2{\color{gray} -1}\\
        \hline
        \text{Case 3}
        & xooxo\underset{0}{x}xoooox\underset{7}{x} & 8/3 & 13/6 & & -7/6\\
        \hline
        \text{Case 4}
        & oo\underset{p}{x}oo\underset{0}{x}xoooox\underset{7}{x} & 17/6 & 17/6 & {\color{gray} 17/6} & -1/3{\color{gray} -1/6}\\
        \hline
        \text{Case 5}
        & ox\underset{p}{x}oo\underset{0}{x}xoooox\underset{7}{x} & 17/6 & 13/6 & {\color{gray} 8/3} & -1{\color{gray} -1/3}\\
        \hline
        \text{Case 6}
        & ooo\underset{p}{x}o\underset{0}{x}xoooox\underset{7}{x} & 17/6 & 13/6 &{\color{gray} 8/3}  & -1{\color{gray} -1/3} \\
        \hline
        \text{Case 7}
        & oxoxo\underset{0}{x}xoooox\underset{7}{x} & 17/6 & 2 & & -7/6\\
        \hline
        \end{array}
      $$

      In all the cases except Case 4, we have a drop strictly smaller than $-1$ and the rules do not give any extra share to the vertex marked by $p$ (the vertices 0 and 1 did not get any as mentioned earlier). To examine Case 4 more carefully, we study the vertices $n-9,n-8,n-7$ and $n-6$ and codewords among them. The cases where the codewords among these four vertices are as follows $\{oooo,xoxo,xooo,ooxo,oxxo,oxoo\}$ are forbidden in a locating-dominating code. Indeed, the first four combinations give $I(n-5)=\emptyset$ and the two last ones give $I(n-6)=I(n-4)$. All the other cases are studied in the following table. In Case 4.6 we have added one more codeword, namely, the $p_0$ (which necessarily must be a codeword).
      \begin{small}
      $$
      \begin{array}{|l|c|c|c|c|c|c|}
        \hline
        Case & pattern & s(1) & s(0) & s(n-3) & \sum s(p_i)\leq  & drop \\ \hline
        \hline
        \text{Case 4.1}
        & xxxxooxoo\underset{0}{x}xooooxx
        & 17/6 & 8/3 &23/12 & & -19/12\\
        \hline
        \text{Case 4.2}
        & xxxoooxoo\underset{0}{x}xooooxx
        & 17/6 & 17/6 & 13/6& & -7/6\\
        \hline
        \text{Case 4.3}
        & xxo\underset{p}{x}ooxoo\underset{0}{x}xooooxx
        & 17/6 & 8/3 & 5/2& {\color{gray} 2}
        & -2 \\
        \hline
        \text{Case 4.4}
        & xoxxooxoo\underset{0}{x}xooooxx
        & 17/6 & 8/3 & 23/12 & & -19/12\\
        \hline
        \text{Case 4.5}
        & oxxxooxoo\underset{0}{x}xooooxx
        & 17/6 & 8/3 &2& & -3/2\\
        \hline
        \text{Case 4.6}
        & \underset{p_0}{x}\underset{p_1}{x}\underset{p_2}{x}ooooxoo\underset{0}{x}xooooxx
        & 17/6 & 17/6& 17/6&{\color{gray} 2*8/3+17/6 }
        & -1/2 {\color{gray} -5/6}\\
        \hline
        \text{Case 4.7}
        & xoo\underset{p}{x}ooxoo\underset{0}{x}xooooxx
        & 17/6 & 8/3 & 5/2& {\color{gray} 5/2}
        & -3/2\\
        \hline
        \text{Case 4.8}
        & oxo\underset{p}{x}ooxoo\underset{0}{x}xooooxx
        & 17/6 & 8/3 & 8/3& {\color{gray} 7/3}
        & -3/2\\
        \hline
        \text{Case 4.9}
        & ooxxooxoo\underset{0}{x}xooooxx
        & 17/6 & 8/3 & 2& & -3/2\\
        \hline
        \text{Case 4.10}
        & ooo\underset{p}xooxoo\underset{0}{x}xooooxx
        & 17/6 & 8/3 & 8/3& {\color{gray} 8/3}
        & -7/6 \\
        \hline
      \end{array}
      $$
      \end{small}
      Notice that the vertices 0, 1 and $n-3$ cannot receive any extra share from the rules. In addition, the codewords marked by $p$ also do not receive share by the rules. Now let us consider the special case Case 4.6. The vertices $p_2$ and $p_1$ do not receive share from the rules. If the vertex $p_0$ does not receive extra share, then the drop is enough. However, the vertex $p_0$ can get a share from the left by the rules R1 or R2. Suppose first that $p_0$ (the vertex $n-10$) receives $5/12$ units of share by the rule R1 from the vertex $n-15$. But then the vertices $n-14$ and $n-15$ belong to $C$ and thus $s(p_2)\le 8/3$, $s(p_1)\le 7/3$ and $s(p_0)\le 13/6$. Hence the new drop (taking into account the vertices 0, 1, $n-3$, $p_0$, $p_1$ and $p_2$) is at least $-7/3$. So even with the extra share the drop is enough $-7/3+5/12=-23/12$. Assume then that $p_0$ gets extra share $7/12$ by the rule R2 from $n-14$ (which belongs to $C$). Now $s(p_2)\le8/3$, $s(p_1)\le17/6$ and $s(p_0)\le 13/6.$ Consequently, the drop of the six vertices is at least $-11/6$ and with the extra share $-11/6+7/12=-5/4$, which is enough.
      \end{itemize}

    In all the cases studied above, we get that the drop of the share is strictly more than $1$. Recall that we consider the cases $n\equiv 2,5\pmod{6}$. We  write $n$ as $3k+2$, which implies $n\equiv 2\pmod{6}$ for $k\equiv 0\pmod{2}$ and $n\equiv 5\pmod{6}$ otherwise. Therefore, we have, for some $\varepsilon>0$:
    $$
    \begin{array}{ccc}
      \displaystyle n=\sum_{i\in C} \ms(i) \leq 3|C| -1 -\varepsilon
      &\Leftrightarrow
      &3k+2+1+\varepsilon \leq 3|C| \\
      &\Leftrightarrow
      & \displaystyle k+1 +\frac{\varepsilon}{3} \leq |C|\\
      &\underset{|C|\in \mathbb{N}}{\Rightarrow}
      & k+2 \leq |C|
    \end{array}
    $$
    This implies that $|C|\geq \lceil n/3\rceil+1$.
  \end{itemize}
\end{proof}

By exhaustive search we find the optimal locating-dominating codes for $n=14,15,17$, which are the remaining values after the previous theorem and the small values determined in \cite{GNlidcn}. An optimal code for $n=14,15$  is $\{0,1,2,9,10,11\}$ and for $n=16$ it is $\{0,1,7,8,12,13,14\}$.

%
%
%
%
%
%
%
%
%
%
%
%
%
%
%
%
%
%
%
%

\begin{theorem} We have
  $\LD(C_{14}(1,3))=\LD(C_{15}(1,3))=6$ and $\LD(C_{17}(1,3))=7.$
\end{theorem}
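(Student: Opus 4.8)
The plan is to prove the three equalities by matching explicit upper bounds against lower bounds, treating these lengths separately because they fall below the threshold $n > 17$ of Theorem~\ref{LD-bound}. Each of $14, 15, 17$ lies in one of the residue classes $n \equiv 2, 3, 5 \pmod 6$ (indeed $14 \equiv 2$, $15 \equiv 3$, $17 \equiv 5 \pmod 6$), so in every case the target value is $\lceil n/3 \rceil + 1$, i.e. $6$, $6$ and $7$ respectively.

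For the upper bounds I would exhibit explicit locating-dominating codes of the claimed cardinality and verify their defining properties directly. Taking $C = \{0,1,2,9,10,11\}$ for both $n = 14$ and $n = 15$, and $C = \{0,1,7,8,12,13,14\}$ for $n = 17$, it suffices to check that $C$ is dominating, i.e. every $u \in \Z_n$ satisfies $N[u] \cap C \neq \emptyset$, and that the identifying sets $I(C;u) = N[u] \cap C$ are pairwise distinct over all non-codewords $u \in \Z_n \setminus C$. Since each closed neighbourhood $N[u]$ consists only of the five vertices $u, u \pm 1, u \pm 3$, this is a short finite verification yielding $\LD(C_{14}(1,3)) \le 6$, $\LD(C_{15}(1,3)) \le 6$ and $\LD(C_{17}(1,3)) \le 7$.

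For the lower bounds, recall from \cite{GNlidcn} that $\LD(C_n(1,3)) \geq \lceil n/3 \rceil$, so it remains to rule out a locating-dominating code of exactly $\lceil n/3 \rceil$ codewords, namely size $5$ for $n = 14, 15$ and size $6$ for $n = 17$. The natural route is an exhaustive search: working up to the dihedral symmetry group of $C_n(1,3)$ (generated by the rotation $u \mapsto u+1$ and the reflection $u \mapsto -u$), the number of candidate codes of these sizes is of the order $\binom{n}{\lceil n/3 \rceil}$ before symmetry reduction, hence only a few thousand, and for each candidate one tests whether it is dominating and locating. Establishing that none of them is locating-dominating gives $\LD(C_{14}(1,3)) \geq 6$, $\LD(C_{15}(1,3)) \geq 6$ and $\LD(C_{17}(1,3)) \geq 7$, matching the upper bounds.

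The main reason a separate argument is required — and the point I expect to be the true obstacle — is that the global share-counting machinery of Theorem~\ref{LD-bound} genuinely breaks down at these lengths. Both Lemma~\ref{nonS3} and the pattern-based \emph{drop} analysis assume $n > 17$, and in a graph this small the patterns $S1$, $S3$, $S4$, $S6$ can wrap around and overlap in ways the averaging argument does not control, so the bound $\sum_{c \in C}\ms(c) < 3|C| - \varepsilon$ cannot be recovered cleanly. Consequently I would not attempt a share-based proof here and would instead rely on the finite exhaustive check, which is entirely feasible given the very small search spaces.
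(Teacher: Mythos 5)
Your proposal matches the paper's approach exactly: the paper also establishes these three values by exhibiting the same explicit codes ($\{0,1,2,9,10,11\}$ for $n=14,15$ and $\{0,1,7,8,12,13,14\}$ for $n=17$, which the paper mislabels as $n=16$) and ruling out smaller codes by exhaustive computer search, precisely because the share-based argument of Theorem~\ref{LD-bound} only applies for $n>17$. Nothing is missing.
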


The following theorem summarizes the results of the section and gives the exact values of $\LD(C_n(1,3))$ for all lengths $n \geq 13$. The exact values of $\LD(C_n(1,3))$ for the lengths $n$ smaller than $13$ have been determined in~\cite{GNlidcn}.
\begin{theorem} \label{TheoremLDConclusion}
Let $n$ be an integer such that $n \geq 13$. If $n \equiv 0,1,4 \pmod{6}$, then $\LD(C_n(1,3)) = \lceil n/3 \rceil$, and otherwise $\LD(C_n(1,3)) = \lceil n/3 \rceil + 1$.
\end{theorem}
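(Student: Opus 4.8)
The plan is to assemble Theorem~\ref{TheoremLDConclusion} from the results already in hand, treating it as a consolidation statement rather than a new argument. I would split the range $n \geq 13$ into the two regimes $n > 17$ and $13 \leq n \leq 17$, since Theorem~\ref{LD-bound} already settles everything in the first regime.

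For $n > 17$, Theorem~\ref{LD-bound} gives precisely $\LD(C_n(1,3)) = \lceil n/3 \rceil$ when $n \equiv 0,1,4 \pmod 6$ and $\LD(C_n(1,3)) = \lceil n/3 \rceil + 1$ otherwise, which is exactly the claimed formula. So it remains only to verify the five lengths $n \in \{13,14,15,16,17\}$ and to check that each agrees with the stated value for its residue class modulo $6$.

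I would dispatch these five cases by residue. The lengths $n = 13$ and $n = 16$ satisfy $13 \equiv 1$ and $16 \equiv 4 \pmod 6$, so they lie in the class $n \equiv 0,1,4 \pmod 6$; here the exact value $\LD(C_n(1,3)) = \lceil n/3 \rceil$ was already established by Ghebleh and Niepel~\cite{GNlidcn}, whose equality for this residue class holds throughout the relevant range and not merely for $n < 13$, giving $\LD(C_{13}(1,3)) = 5$ and $\LD(C_{16}(1,3)) = 6$. The remaining three lengths $n = 14,15,17$ satisfy $14 \equiv 2$, $15 \equiv 3$, $17 \equiv 5 \pmod 6$, so they belong to the ``otherwise'' class; for these the exhaustive-search theorem preceding this one gives $\LD(C_{14}(1,3)) = \LD(C_{15}(1,3)) = 6$ and $\LD(C_{17}(1,3)) = 7$, and one checks directly that $\lceil 14/3 \rceil + 1 = \lceil 15/3 \rceil + 1 = 6$ and $\lceil 17/3 \rceil + 1 = 7$, matching the claim.

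Since no genuinely new estimate is involved, there is no substantial obstacle; the only point requiring care is the residue-and-ceiling bookkeeping. In particular I would confirm that the Ghebleh--Niepel equality is indeed available for $n = 13,16$ (and not only for $n < 13$), and that the five small lengths left open after Theorem~\ref{LD-bound} are exactly those jointly covered by the exhaustive search and by that equality. With those checks in place, the formula holds for every $n \geq 13$.
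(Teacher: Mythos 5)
Your proposal is correct and matches the paper's own (implicit) argument: the paper presents Theorem~\ref{TheoremLDConclusion} as a summary assembled exactly as you describe, from Theorem~\ref{LD-bound} for $n>17$, the Ghebleh--Niepel equality $\LD(C_n(1,3))=\lceil n/3\rceil$ for $n\equiv 0,1,4\pmod 6$ (covering $n=13,16$), and the exhaustive-search values for $n=14,15,17$. The residue-and-ceiling bookkeeping you carry out checks out in all five small cases.
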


\medskip

In~\cite{Mlldcn}, locating-dominating codes in the circulant graphs $C_n(1,2, \ldots, r)$, where $r$ is positive integer, have been considered. However, the results regarding location-domination given in the paper have some problems as presented in the following remark.
\begin{remark} \label{RemarkManuelProblem}
In~\cite{Mlldcn}, it is claimed that $C$ is a locating-dominating code in $C_n(1,2, \ldots, r)$ if and only if the following conditions hold for all sets $\{k, k+1, \ldots, k+(2r+1)\} \ (k \in \Z_n)$ of consecutive vertices: (1) at least one of the vertices $k$ and $k+(2r+1)$ belongs to $C$ and (2) there exist at least two codewords of $C$ in $\{k, k+1, \ldots, k+(2r+1)\}$. However, this characterization is erroneous. Indeed, for example, the code $\{0, 1, 2, 5, 6, 11\}$ is locating-dominating in $C_{15}(1,2,3)$. However, the above conditions are not met when we choose $k=3$ as both $k = 3$ and $k+(2\cdot3+1) = 10$ do not belong to the code. Thus, as the given characterization is erroneous, the other results in~\cite{Mlldcn} based on that one also have problems. For more information regarding locating-dominating codes in $C_n(1,2, \ldots, r)$, which can also be viewed as power graphs of cycles, the interested reader is referred to the paper~\cite{EJLldc}.
\end{remark}


\end{document}